\newcommand{\Z}{{\mathbb{Z}}}
\newcommand{\Q}{{\mathbb{Q}}}
\newcommand{\N}{{\mathbb{N}}}
\newcommand{\R}{{\mathbb{R}}}
\newcommand{\C}{{\mathbb{C}}}
\newcommand{\cD}{{\mathcal{D}}}
\newcommand{\cH}{{\mathcal{H}}}
\newcommand{\cL}{{\mathcal{L}}}
\newcommand{\cS}{{\mathcal{S}}}
\newcommand{\ba}{{\mathbf{a}}}
\newcommand{\fC}{{\mathfrak{C}}}
\newcommand{\fX}{{\mathfrak{X}}}
\newcommand{\Irr}{{\operatorname{Irr}}}
\renewcommand{\leq}{\leqslant}
\renewcommand{\geq}{\geqslant}
\renewcommand{\atop}[2]{\genfrac{}{}{0pt}{}{#1}{#2}}
\newtheorem{thm}{Theorem}[section]
\newtheorem{lem}[thm]{Lemma}
\newtheorem{cor}[thm]{Corollary}
\newtheorem{prop}[thm]{Proposition}
\newtheorem{conj}[thm]{Conjecture}
\title[{\sf PyCox}]{{\sf PyCox}: Computing with (finite) Coxeter groups and 
Iwahori--Hecke algebras}
\author{Meinolf Geck}
\begin{document}
\maketitle

\begin{abstract} We introduce the computer algebra package {\sf PyCox}, 
written entirely in the {\sf Python} language. It implements a set of 
algorithms -- in a spirit similar to the older {\sf CHEVIE} system -- for 
working with Coxeter groups and Hecke algebras. This includes a new 
variation of the traditional algorithm for computing Kazhdan--Lusztig cells 
and $W$-graphs, which works efficiently for all finite groups of rank 
$\leq 8$ (except $E_8$). We also discuss the computation of Lusztig's 
leading coefficients of character values and distinguished involutions 
(which works for $E_8$ as well). Our experiments suggest a re-definition 
of Lusztig's ``special'' representations which, conjecturally, should also 
apply to the unequal parameter case. 
\end{abstract}

\section{Introduction} \label{sec0}
The computer algebra system {\sf CHEVIE} \cite{chv} has been designed to 
facilitate computations with various combinatorial structures arising 
in Lie theory, like finite Coxeter groups and Hecke algebras. It was
initiated about 20 years ago and has been further developed ever since;
see \cite{mycarp} for a discussion of some recent applications of this
system. However, there are some limitations to its use due to its 
dependence on {\sf GAP3} \cite{gap} which is still available, but no longer 
supported (the last release of {\sf GAP3} was in 1997). Therefore, it 
seemed desirable to implement a core set of algorithms around Coxeter 
groups and Hecke algebras in a more modern and widely available environment.
The success of {\sf Sage} \cite{sage} suggested the use of the 
{\sf Python} language \cite{python}. This lead to the development of
{\sf PyCox}, which we present in this note. 

Although in some areas of algebraic manipulations (like permutations, 
algebraic numbers) the performance is inferior to that of {\sf GAP3}, some 
of the advantages of the new system are: it works on every computer where
{\sf Python} is installed; we can now run jobs which use main memory well 
over 4GB; and we can just import {\sf PyCox} as a module into {\sf Sage}, 
whereby we have immediate access to all the further functionality of
{\sf Sage} (including the {\sf Sage} notebook and the interfaces to 
{\sf GAP4} and, even, {\sf GAP3}). 

In Section~\ref{design}, we briefly describe the basic design features 
of {\sf PyCox} and give some examples of how to use it; more details are 
available through the online help within {\sf PyCox}. 

We shall then discuss some concrete applications of our programs to 
questions related to the theory of Kazhdan--Lusztig cells and the 
associated $W$-graphs. (The basic definitions will be recalled in 
Section~\ref{secwgr}.) The problem of computing such cells has beed 
addressed by several authors, most notably by 
Alvis \cite{Al} and DuCloux \cite{state}, \cite{fokko}, \cite{Fokko}. In 
Section~\ref{secrel}, we present a variation of the known algorithm where 
the new ingredient is the use of ``relative'' Kazhdan--Lusztig polynomials, 
as defined in \cite{myind}, \cite{HowYi}. (Neither {\sf CHEVIE} nor 
DuCloux's {\sf Coxeter} \cite{fokko} contains an implementation of these.) 
As may be expected, the systematic use of these relative polynomials 
instead of the ordinary Kazhdan--Lusztig polynomials leads to a significant 
efficiency gain for the determination of left cells. (This idea was 
essentially already formulated by DuCloux \cite[\S 5.3]{state}; Howlett 
and Yin \cite{HowYi3} constructed $W$-graphs affording irreducible 
representations in this way.) While Howlett and Yin \cite{HowYi} only 
considered the equal parameter case, we shall describe recursion formulae 
in the general case of unequal parameters. 

Within {\sf PyCox}, it is now possible to deal with left cells and the
corresponding $W$-graphs for finite Coxeter groups of rank up to around $8$,
including types $H_4$, $E_6$ and $E_7$ but~--~not surprisingly~--~with the 
exception of type $E_8$. There are also programs for computing Lusztig's 
leading coefficients of character values \cite{Lu4} and distinguished 
involutions \cite{Lu2}; these do work for type $E_8$ as well. As far as 
we are aware, this is the first general program which is capable of 
dealing with this level of information for a group of size like that of 
type $E_7$ or $E_8$. As an example of an application we just mention that 
it is now straightforward to verify that Kottwitz's conjecture \cite{kott} 
on the intersections of left cells with conjugacy classes of involutions 
holds for type $E_7$, following the general methods explained by Casselman
\cite{cass}. (Abbie Halls, at the University of Aberdeen, is currently
working on type $E_8$, where more specialised methods and programming are 
required.) We also stress the fact that our programs only use results 
concerning cells which are generally known to hold by elementary 
arguments; in particular, we do not rely on any ``positivity'' results 
or facts concerning Lusztig's $\ba$-function \cite{Lu1}. 

Finally, in Section~\ref{secJ}, we consider the problem of computing
the character tables of certain symmetric algebras which have been
associated by Lusztig  \cite{Lu4} with the various left cells of a finite 
Coxeter group. In {\sf PyCox}, these tables can be determined by an 
automatic procedure for all groups of rank up to around $7$. In Lusztig's
work \cite{Lu4}, the non-crystallographic types $I_2(m)$, $H_3$ and $H_4$
have been excluded from the discussion. Here, we complete the picture by 
treating theses cases as well. This will allow us to formulate in full 
generality an alternative characterisation of ``special'' representations, 
which were originally defined by Lusztig \cite{Lusztig79b}. This 
characterisation should also make sense in the general case of unequal 
parameters; see Conjecture~\ref{cspec}. We discuss some examples 
to support this conjecture.

\section{Design of {\sf PyCox}} \label{design}
The whole {\sf PyCox} system is contained in one file accompanying this 
article; it is freely available for download, under the GPL licence. (The 
file is called {\tt chv1r6.py}, it has 13368 lines and the size is
roughly 522 KB; updates will also be made available at the author's 
homepage.)
So, in order to use {\sf PyCox} on your computer, all you need to do (once 
you have dowloaded the file) is to launch {\sf Python} (2.6 or higher) and 
import the file as a module, e.g., by typing:
\begin{verbatim}
    >>> from chv1r6 import *
\end{verbatim}
(You should then see a welcome banner.) Similarly, if {\sf Sage} (version 
4.7 or higher) is installed on your computer, you can import {\sf PyCox} 
as a module into {\sf Sage}:
\begin{verbatim}
    sage: from chv1r6 import *
\end{verbatim}
A good place to start is to type {\tt 'help(coxeter)'} or 
{\tt 'allfunctions()'}. 

We shall now discuss some of the basic principles of the system and show
some concrete examples. As in {\sf CHEVIE}, the basic object from which 
everything is built up is that of a Cartan matrix. Let $S$ be a finite 
non-empty index set and $C=(c_{st})_{s,t \in S}$ be a matrix with 
entries in~$\R$. Following \cite[\S 1.1]{gepf}, we say that $C$ is a 
{\em Cartan matrix} if the following conditions are satisfied:
\begin{enumerate}
\item[(C1)] For $s \neq t$ we have $c_{st} \leq 0$; furthermore, $c_{st}
\neq 0$ if and only if $c_{ts} \neq 0$.
\item[(C2)] We have $c_{ss}=2$ and, for $s \neq t$, we have $c_{st}
c_{ts}=4\cos^2(\pi/m_{st})$, where $m_{st} \geq 2$ is an integer or
$m_{st}=\infty$.
\end{enumerate}
Let $C=(c_{st})_{s,t \in S}$ be a Cartan matrix and $V$ be an ${\R}$-vector 
space of dimension $|S|$, with a fixed basis $\{\alpha_s \mid s \in S\}$. 
For each $s\in S$, we define a linear map on $V$ as follows: 
\[\dot{s} \colon V \rightarrow V, \quad \alpha_t \mapsto \alpha_t-c_{st} 
\alpha_s \quad (t \in S).\]
We shall act from the right so we use the row convention for matrices of 
linear maps. Let 
\[ W:=\langle \dot{s} \mid s \in S \rangle \subseteq \mbox{GL}(V).\]
In what follows, we shall often omit the dot when referring to the maps
$\dot{s} \colon V \rightarrow V$; in particular, $S$ will be regarded 
directly as a subset of $\mbox{GL}(V)$. With this convention, the group $W$
has a presentation with generators $S$ and defining relations as follows
(see \cite[1.2.7]{gepf}):
\begin{center}
$s^2=1$ for all $s \in S$ and $(st)^{m_{st}}=1$ for all $s\neq t$ in
$S$ with $m_{st}<\infty$.
\end{center}
Thus, $W$ is a Coxeter group and all Coxeter groups arise in this way.
The matrix $M=(m_{st})_{s,t \in S}$ is called the {\em Coxeter matrix}
of $W$. 

The set $\Phi:=\{ \alpha_s.w \mid s \in S, w \in W\}$ is the 
corresponding root system. There is a well-defined partition $\Phi=
\Phi^+ \amalg \Phi^-$ where $\Phi^+$ is the set of all $\alpha \in \Phi$ 
which can be expressed in terms of the basis $\{\alpha_s\mid s \in S\}$ 
where all coefficients are non-negative, and $\Phi^-=\{-\alpha\mid\alpha 
\in \Phi\}$; see \cite[1.1.9]{gepf}. Based on this information alone, we 
already have an efficient way of testing if an element $w \in W$ (given 
as a word in the generators in $S$) equals the identity or not: it
suffices to compute the corresponding linear map of $V$ and check if its
matrix is the identity or not. More generally, if $w \neq 1$, we can 
efficiently find an $s \in S$ such that $l(sw)<l(w)$ (see 
\cite[1.1.9]{gepf}):
\[ l(sw)=l(w)-1 \quad \mbox{if and only if} \quad \alpha_s.w\in \Phi^-.\]
(Here, $l(w)$ denotes the usual length of $w \in W$.)

Following the general ideas in {\sf CHEVIE}, the basic function in 
{\sf PyCox} is that of creating a Coxeter group from a Cartan matrix (as 
a {\sf Python} ``class''): 
\begin{verbatim}
    >>> W=coxeter([[2, -1, -1], [-1, 2, -1], [-1, -1, 2]])
\end{verbatim}
When the function {\tt coxeter} is called, it computes some basic 
information directly from the Cartan matrix, for example, the matrices of 
the generating reflections and the Coxeter matrix. These pieces of 
information are saved as components in the resulting {\sf Python} class:
\begin{verbatim}
    >>> W.coxetermat
    [[1, 3, 3], [3, 1, 3], [3, 3, 1]]
    >>> W.cartantype
    [['U', [0, 1, 2]]]       # 'U' stands for infinite 
    >>> W.matgens            # matrices of the generators
    [((-1, 0, 0), (1, 1, 0), (1, 0, 1)), 
     ((1, 1, 0), (0, -1, 0), (0, 1, 1)),
     ((1, 0, 1), (0, 1, 1), (0, 0, -1))]
\end{verbatim}
We can now do some basic operations with the elements of $W$:
\begin{verbatim}
    >>> m=W.wordtomat([1, 0, 1, 2, 1, 0]); m
    ((-1, 0, -1), (-2, -2, -1), (4, 3, 3))
    >>> W.mattoword(m)       # lexicographically smallest 
    [0, 1, 0, 2, 1, 0]       # reduced expression of m
    >>> W.leftdescentsetmat(m)
    [0, 1]
    >>> W.rightdescentsetmat(m)
    [0]
\end{verbatim}
When $W$ is finite, the function {\tt coxeter} also computes, for 
example, the number of positive roots and the order of $W$:
\begin{verbatim}
    >>> A=[[   2,  0, -1, -ir5,  0],   # ir5=(1+sqrt(5))/2; 
           [   0,  2,  0,    0, -1],   # see the function 
           [  -1,  0,  2,    0,  0],   # zeta5
           [-ir5,  0,  0,    2,  0], 
           [   0, -3,  0,    0,  2]]
    >>> W=coxeter(A) 
    >>> W.N; W.order; W.degrees
    21                           # number of positive roots
    1440                         # order of the group
    [2, 2, 6, 6, 10]             # reflection degrees
\end{verbatim}
There are further components, like {\tt W.roots} (containing a list of 
all roots), {\tt W.permgens} (the permutation action of the generators 
on the roots) etc.; the list of all components is obtained by {\tt dir(W)} 
(a general {\sf Python} function). In order to check if $W$ is finite, 
{\tt coxeter} uses the known list of Coxeter matrices of irreducible 
finite Coxeter groups, encoded in terms of graphs (with a standard 
labelling of the vertices) as in Table~\ref{Mcoxgraphs}. 

\begin{table}[htbp]
\caption{Coxeter graphs of irreducible finite Coxeter groups}
\label{Mcoxgraphs}
\begin{center} 
\begin{picture}(325,160)
\put( 15, 25){$E_7$}
\put( 40, 25){\circle*{5}}
\put( 38, 30){$0$}
\put( 40, 25){\line(1,0){20}}
\put( 60, 25){\circle*{5}}
\put( 58, 30){$2$}
\put( 60, 25){\line(1,0){20}}
\put( 80, 25){\circle*{5}}
\put( 78, 30){$3$}
\put( 80, 25){\line(0,-1){20}}
\put( 80,  5){\circle*{5}}
\put( 85,  3){$1$}
\put( 80, 25){\line(1,0){20}}
\put(100, 25){\circle*{5}}
\put( 98, 30){$4$}
\put(100, 25){\line(1,0){20}}
\put(120, 25){\circle*{5}}
\put(118, 30){$5$}
\put(120, 25){\line(1,0){20}}
\put(140, 25){\circle*{5}}
\put(138, 30){$6$}

\put(175, 25){$E_8$}
\put(200, 25){\circle*{5}}
\put(198, 30){$0$}
\put(200, 25){\line(1,0){20}}
\put(220, 25){\circle*{5}}
\put(218, 30){$2$}
\put(220, 25){\line(1,0){20}}
\put(240, 25){\circle*{5}}
\put(238, 30){$3$}
\put(240, 25){\line(0,-1){20}}
\put(240,  5){\circle*{5}}
\put(245,  3){$1$}
\put(240, 25){\line(1,0){20}}
\put(260, 25){\circle*{5}}
\put(258, 30){$4$}
\put(260, 25){\line(1,0){20}}
\put(280, 25){\circle*{5}}
\put(278, 30){$5$}
\put(280, 25){\line(1,0){20}}
\put(300, 25){\circle*{5}}
\put(298, 30){$6$}
\put(300, 25){\line(1,0){20}}
\put(320, 25){\circle*{5}}
\put(318, 30){$7$}

\put(  5, 90){$I_2(m)$}
\put(  7, 79){$\;\scriptstyle{m \geq 3}$}
\put( 40, 85){\circle*{5}}
\put( 38, 90){$0$}
\put( 47, 88){$\scriptstyle{m}$}
\put( 40, 85){\line(1,0){20}}
\put( 60, 85){\circle*{5}}
\put( 58, 90){$1$}

\put( 95, 84){$F_4$}
\put(120, 85){\circle*{5}}
\put(118, 90){$0$}
\put(120, 85){\line(1,0){20}}
\put(140, 85){\circle*{5}}
\put(138, 90){$1$}
\put(140, 85){\line(1,0){20}}
\put(148, 88){$\scriptstyle{4}$}
\put(160, 85){\circle*{5}}
\put(158, 90){$2$}
\put(160, 85){\line(1,0){20}}
\put(180, 85){\circle*{5}}
\put(178, 90){$3$}

\put( 15, 55){$H_3$}
\put( 40, 55){\circle*{5}}
\put( 38, 60){$0$}
\put( 40, 55){\line(1,0){20}}
\put( 49, 58){$\scriptstyle{5}$}
\put( 60, 55){\circle*{5}}
\put( 58, 60){$1$}
\put( 60, 55){\line(1,0){20}}
\put( 80, 55){\circle*{5}}
\put( 78, 60){$2$}

\put(115, 55){$H_4$}
\put(140, 55){\circle*{5}}
\put(138, 60){$0$}
\put(140, 55){\line(1,0){20}}
\put(149, 58){$\scriptstyle{5}$}
\put(160, 55){\circle*{5}}
\put(158, 60){$1$}
\put(160, 55){\line(1,0){20}}
\put(180, 55){\circle*{5}}
\put(178, 60){$2$}
\put(180, 55){\line(1,0){20}}
\put(200, 55){\circle*{5}}
\put(198, 60){$3$}

\put(215, 75){$E_6$}
\put(240, 75){\circle*{5}}
\put(238, 80){$0$}
\put(240, 75){\line(1,0){20}}
\put(260, 75){\circle*{5}}
\put(258, 80){$2$}
\put(260, 75){\line(1,0){20}}
\put(280, 75){\circle*{5}}
\put(278, 80){$3$}
\put(280, 75){\line(0,-1){20}}
\put(280, 55){\circle*{5}}
\put(285, 53){$1$}
\put(280, 75){\line(1,0){20}}
\put(300, 75){\circle*{5}}
\put(298, 80){$4$}
\put(300, 75){\line(1,0){20}}
\put(320, 75){\circle*{5}}
\put(318, 80){$5$}

\put( 13,120){$B_n$}
\put( 12,109){$\scriptstyle{n \geq 2}$}
\put( 40,115){\circle*{5}}
\put( 38,120){$0$}
\put( 40,115){\line(1,0){20}}
\put( 48,118){$\scriptstyle{4}$}
\put( 60,115){\circle*{5}}
\put( 58,120){$1$}
\put( 60,115){\line(1,0){30}}
\put( 80,115){\circle*{5}}
\put( 78,120){$2$}
\put(100,115){\circle*{1}}
\put(110,115){\circle*{1}}
\put(120,115){\circle*{1}}
\put(130,115){\line(1,0){10}}
\put(140,115){\circle*{5}}
\put(133,120){$n{-}1$}

\put( 13,150){$A_{n-1}$}
\put( 16,139){$\scriptstyle{n \geq 2}$}
\put( 60,145){\circle*{5}}
\put( 58,150){$0$}
\put( 60,145){\line(1,0){30}}
\put( 80,145){\circle*{5}}
\put( 78,150){$1$}
\put(100,145){\circle*{1}}
\put(110,145){\circle*{1}}
\put(120,145){\circle*{1}}
\put(130,145){\line(1,0){10}}
\put(140,145){\circle*{5}}
\put(133,150){$n{-}2$}

\put(200,127){$D_n$}
\put(200,116){$\scriptstyle{n \geq 3}$}
\put(220,145){\circle*{5}}
\put(225,145){$1$}
\put(220,105){\circle*{5}}
\put(226,100){$0$}
\put(220,145){\line(1,-1){21}}
\put(220,105){\line(1,1){21}}
\put(240,125){\circle*{5}}
\put(238,130){$2$}
\put(240,125){\line(1,0){30}}
\put(260,125){\circle*{5}}
\put(258,130){$3$}
\put(280,125){\circle*{1}}
\put(290,125){\circle*{1}}
\put(300,125){\circle*{1}}
\put(310,125){\line(1,0){10}}
\put(320,125){\circle*{5}}
\put(313,130){$n{-}1$}
\end{picture}
\end{center}
\end{table}

Note that, in general, there may be several Cartan matrices which 
give rise to the same Coxeter matrix. In {\sf PyCox} (as in {\sf CHEVIE}) 
we have adopted the following conventions:
\begin{itemize}
\item If $m_{st}$ is odd, then $c_{st}=c_{ts}$. (This has the
consequence that the root system is reduced; see \cite[1.3.6]{gepf}.)
\item If $m_{st}$ is even, then $c_{st}=-1$  or $c_{ts}=-1$. 
\end{itemize}
For example, the following two Cartan matrices both correspond to the 
Coxeter matrix of type $B_3$:
\begin{verbatim}
    >>> cartanmat("B",3)
    [[2, -2, 0], [-1, 2, -1], [0, -1, 2]]
    >>> cartanmat("C",3)
    [[2, -1, 0], [-2, 2, -1], [0, -1, 2]]
\end{verbatim}
See the help for the function {\tt cartanmat} for a detailed description
of the resulting choices of Cartan matrices for the various finite types. 

When {\tt coxeter} is called, it decomposes the Cartan matrix into 
its indecomposable components and checks if the corresponding Coxeter 
graphs appear in the list in Table~\ref{Mcoxgraphs}. If this is so, it 
matches the Cartan matrices of the indecomposable components to those 
returned by the function {\tt cartanmat}. This information is kept in the 
component {\tt W.cartantype}. In the above example (where $W$ is defined
by a $5 \times 5$ Cartan matrix), we have:
\begin{verbatim}
    >>> W.cartantype
    [['H', [3, 0, 2]], ['G', [1, 4]]]
\end{verbatim}
This means that the submatrix of $A$ with rows and columns indexed by 
$3,0,2$ (in this order) is the standard Cartan matrix of type $H_3$, as 
returned by calling {\tt cartanmat('H',3)}; similarly, the submatrix of 
$A$ with rows and columns indexed by $1,4$ (in this order) is the standard 
Cartan matrix of type $G_2$, as returned by calling {\tt cartanmat('G',2)}. 
In particular, we see that our group $W$ is of type $H_3 \times G_2$. 

The type recognition procedure is particularly helpful when dealing with 
reflection sub\-groups:
\begin{verbatim}
    >>> W=coxeter("F",4)  # same as coxeter(cartanmat("F",4))
    >>> H=reflectionsubgroup(W,[1,2,6,47])
    >>> H.cartantype                     # subgroup generated 
    [['C', [0, 1, 2]], ['A', [3]]]       # by  reflections at     
    >>> H.cartan                         # roots no. 1,2,6,47
    [[2,-1, 0, 0], [-2, 2,-1, 0], [0,-1, 2, 0], [0, 0, 0, 2]]
\end{verbatim}
Here, $H$ will be a Coxeter group in its own right. The information 
about the embedding into $W$ is held in the component {\tt H.fusions}; 
every Coxeter group in {\sf PyCox} has such a {\tt fusion} component:
it will at least contain the embedding into itself; see the online help 
of {\tt reflectionsubgroup} for further details. In the above example,
we have:
\begin{verbatim}
    >>>  W.cartanname    # unique string identifying W
    'F4c0c1c2c3'
    >>> H.fusions['F4c0c1c2c3']
    {'parabolic': False, 'subJ': [1, 2, 3, 23]}
\end{verbatim}
Thus, $H$ is not a parabolic subgroup and the four simple reflections
of $H$ correspond to the reflections with roots indexed by $1,2,3,23$ in
{\tt W.roots}. (This design is different from that in {\sf CHEVIE}; it 
appears to be better suited to recursive algorithms involving various 
reflection subgroups.) 

Let us assume from now on that $W$ is finite. Then, in principle, every 
piece of information about $W$ is ultimately computable from the Cartan 
matrix of $W$. However, as in {\sf CHEVIE}, some very basic and 
frequently used pieces of information are explicitly stored within the 
system; this is particularly relevant for data which are accompanied by
some more or less natural labellings (like partitions of $n$ for the 
conjugacy classes and irreducible characters of groups of type $A_{n-1}$).
In {\sf PyCox}, we store explicitly the following pieces of information
(with the appropriate labellings where this applies):
\begin{itemize}
\item reflection degrees  (see the function {\tt degreesdata});
\item conjugacy classes (see {\tt conjclassdata});
\item character tables (see {\tt irrchardata} and 
{\tt heckeirrdata});
\item Schur elements (see {\tt schurelmdata}).
\end{itemize}
For classical types $A_n$, $B_n$, $D_n$, this is done in the form of
combinatorial algorithms; for the remaining exceptional types, explicit
tables with the relevant information are stored. Then, for example, when the 
function {\tt chartable(W)} is called, {\sf PyCox} will build the character 
table of $W$ from the explicitly stored data for the irreducible components 
of $W$. (Note that $W$ is a direct product of its irreducible components, 
and there is a standard procedure to build the character table of a direct 
product of finite groups from the character tables of the direct factors.)

Let us now give a concrete example of how to use these programs. We would 
like to program a function which returns the list of involutions in $W$, 
that is, all the elements $w \in W$ such that $w^2=1$. To start somewhere, 
we have a look at the list of all available functions in {\sf PyCox}; this 
is printed by calling {\tt allfunctions()}. There is a function
{\tt allelmsproperty} which takes as input a group $W$ and a function 
$f\colon W\rightarrow \{{\tt True}, {\tt False}\}$; it returns the list 
of all $w \in W$ (as reduced words) such that $f(w)={\tt True}$. This 
certainly fits our problem: we just need to define $f$ such that $f(w)=
{\tt True}$ if $w$ has order $1$ or $2$, and $f(w)={\tt False}$ otherwise. 
Thus, our first candidate for the desired function is:
\begin{verbatim}
    >>> def involutions1(W):
    ...   return allelmsproperty(W,lambda x:W.permorder(W.wordtoperm(x))<=2)
    >>> len(involutions1(coxeter("E",6)))
    892
\end{verbatim}
This works fine for groups of moderate size but, eventually, we would also 
like to apply this to big examples like groups of type $E_7$ and $E_8$; 
however, when we do this, we notice that a long time will pass before we 
see a result. This is because {\tt allelmsproperty} is an ``all-purpose'' 
function which runs through all elements of $W$, transforms every element 
into a permutation and checks if this has order $1$ or $2$. For type $E_8$ 
with its 696{,}729{,}600 elements this will simply take too long. We need 
to tailor our program more specifically to the problem that we are dealing
with. Now, the set of involutions is invariant under conjugation so it 
will be a union of the conjugacy classes of $W$. The function 
{\tt conjugacyclasses} does return some information about the conjugacy 
classes of $W$, including representatives of the classes and the sizes of the
classes. (This uses data stored within the system; see {\tt conjclassdata}.) 
So, alternatively to our first try above, we could just select the class 
representatives which are involutions and then compute the corresponding 
conjugacy classes. In {\sf Python}, this can in fact be done in one line:
\begin{verbatim}
    >>> def involutions(W):
    ...   return flatlist([conjugacyclass(W, W.wordtoperm(w)) 
    ...            for w in conjugacyclasses(W)['reps'] 
    ...              if W.wordtocoxelm(2*w)==tuple(W.rank)])
\end{verbatim}
This even works in type $E_8$ where it returns the list of $199952$
involutions in about $1$ minute. (In {\sf GAP3}, the analogous function
would be roughly twice as fast, thanks to the much more efficient
arithmetic for permutations.) 
It is known that involutions play a special role in the theory of 
Kazhdan--Lusztig cells; see Lusztig \cite{Lu2}, Kottwitz \cite{kott}, 
Lusztig--Vogan \cite{luvo11}. We shall come back to this in 
Section~\ref{secJ}.

\section{Cells and $W$-graphs} \label{secwgr}

Let $W$ be a Coxeter group, with generating set $S$. In this section,
we briefly recall some basic definitions concerning left cells and the 
corresponding $W$-graphs, as introduced by Kazhdan and Lusztig \cite{KaLu}, 
\cite{Lusztig83}. Roughly speaking, these concepts give rise to a partition 
\[ W=\fC_1 \amalg \fC_2 \amalg  \ldots \amalg \fC_r\]
and, for each piece $\fC_i$ in this partition, a $W$-module $[\fC_i]_1$ 
with a standard basis $\{b_x \mid x \in \fC_i\}$ where the action of a 
generator $s \in S$ is described by formulae of a particularly simple 
form (encoded in a ``$W$-graph'', see Definition~\ref{defwgr} and 
Remark~\ref{rwgr} below). To give more precise definitions, we need to 
fix some notation. We shall work in the general multi-parameter framework 
of Lusztig \cite{Lusztig83}, \cite{Lusztig03}, which introduces a weight 
function into the picture on which all the subsequent constructions depend.

Let $\Gamma$ be an abelian group (written additively). Let $\{p_s\mid 
s\in S\} \subseteq \Gamma$ be a collection of elements such that $p_s=p_t$ 
whenever $s,t \in S$ are conjugate in $W$. This gives rise to a weight 
function 
\[ L \colon W\rightarrow\Gamma\]
in the sense of Lusztig \cite{Lusztig03}; we have $L(w)=p_{s_1}+\ldots+ 
p_{s_k}$ where $w=s_1\cdots s_k$ ($s_i \in S$) is a reduced expression 
for $w \in W$. We shall assume that $\Gamma$ admits a total ordering 
$\leq$ which is compatible with the group structure, that is, whenever 
$g, g' \in \Gamma$ are such that $g \leq g'$, we have $g+h\leq g'+h$ for 
all $h \in \Gamma$. We assume throughout that 
\[ L(s) \geq 0 \qquad \mbox{for all $s \in S$}.\]
(The original ``equal parameter'' setting of \cite{KaLu} corresponds to 
the case where $\Gamma=\Z$ with its natural ordering and $p_s=1$ for all 
$s \in S$.)

Furthermore, let $R \subseteq \C$ be a subring and $A=R[\Gamma]$ be the 
free $R$-module with basis $\{\varepsilon^g \mid g\in \Gamma\}$. (The
basic constructions in this section are independent of the choice of $R$
and so we could just take $R=\Z$ here; the flexibility of choosing $R$ will
be useful once we consider representations of $W$.) There is a well-defined 
ring structure on $A$ such that $\varepsilon^g \varepsilon^{g'}=
\varepsilon^{g+ g'}$ for all $g,g' \in \Gamma$. We write $1=\varepsilon^0 
\in A$. Let $\cH$ be the generic Iwahori--Hecke algebra corresponding to 
$(W,S)$, with parameters $\{\varepsilon^{L(s)}\mid s\in S\}$. Thus, $\cH$ 
has an $A$-basis $\{\tilde{T}_w\mid w \in W\}$ and the multiplication is 
given by the rule 
\[\tilde{T}_s\,\tilde{T}_w=\left\{\begin{array}{cl} \tilde{T}_{sw} & 
\quad \mbox{if $l(sw)>l(w)$},\\ \tilde{T}_{sw}+(\varepsilon^{L(s)}-
\varepsilon^{-L(s)}) \tilde{T}_w & \quad \mbox{if $l(sw)<l(w)$};
\end{array} \right.\]
here, $l\colon W \rightarrow {\N}_0$ denotes the usual length function
on $W$ with respect to $S$. 

Let $\Gamma_{\geq 0}:=\{g\in \Gamma\mid g\geq 0\}$ and denote by 
$A_{\geq 0}$ (or $R[\Gamma]_{\geq 0}$) the set of all $R$-linear 
combinations of terms $\varepsilon^g$ where $g\geq 0$. The notations 
$A_{>0}$, $A_{\leq 0}$, $A_{<0}$ (or $R[\Gamma_{>0}]$, $R[\Gamma_{\leq 0}]$, 
$R[\Gamma_{<0}]$) have a similar meaning. 

Let $a \mapsto \bar{a}$ be the $R$-linear involution of $A[\Gamma]$ which 
takes $g$ to $g^{-1}$ for any $g \in \Gamma$. This extends to a ring 
involution $\cH \rightarrow \cH$, $h \mapsto \bar{h}$, where
\[ \overline{\sum_{w \in W} a_w\tilde{T}_w}=\sum_{w \in W} \bar{a}_w
\tilde{T}_{w^{-1}}^{-1} \qquad (a_w \in A \mbox{ for all $w \in W$}).\]
We then have a corresponding {\em Kazhdan--Lusztig basis} of $\cH$, which 
we denote by $\{C_w'\mid w\in W\}$ (as in \cite{Lusztig83}). 
The basis element $C_w'$ is uniquely determined by the conditions that
\[\overline{C}_w'=C_w'\qquad \mbox{and}\qquad C_w'=\sum_{y \in W} 
P^{*}_{y,w}\, \tilde{T}_y\]
where $P_{w,w}^*=1$ and $P^{*}_{y,w}\in {\Z}[\Gamma_{<0}]$ if $y \neq w$;
furthermore, we have $P_{y,w}^*=0$ unless $y\leq w$, where $\leq$ denotes 
the Bruhat--Chevalley order on $W$. For $w \in W$ and $s \in S$, we have 
\[ \renewcommand{\arraystretch}{1.2} \tilde{T}_sC_w'=\left\{\begin{array}{cl} 
C_{sw}' & \mbox{ if $L(s)=0$},\\ \varepsilon^{L(s)} C_w' & 
\mbox{ if $L(s)>0$, $sw<w$},\\ \displaystyle C_{sw}'
-\varepsilon^{-L(s)}C_w'+ \sum_{\atop{y \in W}{sy<y<w}} M_{y,w}^s\, C_y'& 
\mbox{ if $L(s)>0$, $sw>w$}, \end{array}\right.\]
where $M_{y,w}^s$ are certain elements of ${\Z}[\Gamma]$ such that 
$\bar{M}_{y,w}^s=M_{y,w}^s$. As explained in \cite[\S 3]{Lusztig83},
these elements are determined by the inductive condition 
\begin{equation*}
M_{y,w}^s-\varepsilon^{L(s)}P_{y,w}^*+\sum_{\atop{z \in W}{sz<z, y<z<w}}
P_{y,z}^* M_{z,w}^s \in {\Z}[\Gamma_{<0}]\tag{M1}
\end{equation*}
and by the symmetry condition
\begin{equation*}
\overline{M}_{y,w}^s=M_{y,w}^s.\tag{M2}
\end{equation*}
By applying the anti-involution $\cH \rightarrow \cH$, $\tilde{T}_w \mapsto
\tilde{T}_{w^{-1}}$, we also obtain ``right-handed'' versions of the above
formulae (see \cite[\S 6]{Lusztig83}).

\begin{rem} \label{eqp} We set $P_{y,w}=\varepsilon^{L(w)-L(y)}
P_{y,w}^*$. Then it is known that $P_{y,w} \in {\Z}[\Gamma_{\geq 0}]$; see
Lusztig \cite[Prop.~5.4]{Lusztig03}. Furthermore, we have 
\[ \varepsilon^{L(s)} M_{y,w}^s \in {\Z}[\Gamma_{>0}] \qquad
\mbox{where $sy<y<w<sw$ ($s \in S$)};\]
see \cite[Prop.~6.4]{Lusztig03}. Assume now that $\Gamma=\Z$ and $L(s)=1$ 
for all $s \in S$ (equal parameter case, as in \cite{KaLu}). Then $A$ is the 
ring of Laurent polynomials in the indeterminate $\varepsilon$. Let $y,w 
\in W$ be such that $y<w$. Let $s \in S$ be such that $sy<y<w<s$. Now 
$P_{y,w}^*$ is a polynomial in $\varepsilon^{-1}$. Consequently, $P_{y,w}$ 
is a polynomial in $\varepsilon$ of degree at most $l(w)-l(y)-1$. In this 
situation, it is known that $M_{y,w}^s$ has the following simple description:
\begin{align*} 
M_{y,w}^s&=\mbox{ coefficient of $\varepsilon^{-1}$ in $P_{y,w}^*$}\\
&=\mbox{ coefficient of $\varepsilon^{l(w)-l(y)-1}$ in $P_{y,w}$};
\end{align*}
see Lusztig \cite[Cor.~6.5]{Lusztig03}.
\end{rem}

\begin{exmp} \label{rcrit} The determination of $M_{y,w}^s$ in the case 
of unequal parameters is considerably more involved than in the case of 
equal parameters. For example, assume that there exists some $t \in S$ 
such that $L(t)>0$, $ty>y$ and $tw<w$. Then $P_{y,w}^*=\varepsilon^{-L(t)} 
P_{ty,w}^*$. In the equal parameter case, this implies that $M_{y,w}^s=0$ 
unless $ty=w$, in which case $M_{y,w}^s=1$. In the general case of 
unequal parameters, if $ty=w$, we have
\[ M_{y,w}^s=\left\{\begin{array}{cl} 0 & \quad \mbox{if $L(s)<L(t)$},\\
1 & \quad \mbox{if $L(s)=L(t)$},\\
\varepsilon^{L(s)-L(t)}+\varepsilon^{L(t)-L(s)} & \quad \mbox{if
$L(s)>L(t)$};\end{array}\right.\]
see \cite[Prop.~5]{Lusztig83}. Furthermore, if $w\neq ty$, it can happen 
that $M_{y,w}^s\neq 0$.
\end{exmp}

\begin{defn}[(Kazhdan--Lusztig \protect{\cite{KaLu}} (equal 
parameter case); see \protect{\cite[1.4.11]{geja}} for general $L$)] 
\label{defwgr} A {\em $W$-graph} for $\cH$ consists of the following data:
\begin{itemize}
\item[(a)] a base set $\fX$ together with a map $I$ which assigns to
each $x \in \fX$ a subset $I(x) \subseteq S$;
\item[(b)] for each $s\in S$ with $L(s)>0$, a collection of 
elements 
\[ \{m_{x,y}^s\mid x,y \in \fX \mbox{ such that } s \in I(x),\,
s \not\in I(y)\};\]
\item[(c)] for each $s\in S$ with $L(s)=0$, a bijection
$\fX \rightarrow \fX$, $x \mapsto s.x$. 
\end{itemize}
These data are subject to the following requirements. First we require
that, for any $x,y \in \fX$ and $s \in S$ where $m_{x,y}^s$ is defined, we 
have 
\[\varepsilon^{L(s)}m_{x,y}^s\in R[\Gamma_{>0}]\qquad\mbox{and}
\qquad\overline{m}_{x, y}^s=m_{x,y}^s.\]
Furthermore, let $[\fX]$ be a free $A$-module with a basis $\{b_y \mid
y\in \fX\}$. For $s \in S$, define an $A$-linear map $\rho_s \colon [\fX]
\rightarrow [\fX]$ by
\begin{equation*}
\renewcommand{\arraystretch}{1.2}
\rho_s(b_y)= \left\{\begin{array}{cl} 
b_{s.y} & \qquad\mbox{if $L(s)=0$},\\
-\varepsilon^{-L(s)}\,b_y &\qquad \mbox{if $L(s)>0$, $s \in I(y)$}, \\
\displaystyle{\varepsilon^{L(s)}\,b_y+\sum_{x \in \fX:\,s \in I(x)} 
m_{x,y}^s \,b_{x}} &\qquad\mbox{if $L(s)>0$, $s\not\in I(y)$}.
\end{array}\right.
\end{equation*}
Then we require that the assignment $\tilde{T}_s \mapsto \rho_s$ 
defines a representation of $\cH$.
\end{defn}

\begin{exmp}[(Kazhdan--Lusztig \cite{KaLu}, Lusztig \cite{Lusztig83})]
\label{explcell} Let $y,z\in W$. We write $z \leftarrow_{\cL} 
y$ if there exists some $s \in S$ such that $C_z'$ appears with non-zero
multiplicity in $C_s'C_y'$ (when expressed in the $C'$-basis of $\cH$).
Thus, we have:
\[ z \leftarrow_{\cL} y \quad \Leftrightarrow \quad \left\{\begin{array}{l}
\mbox{if $z=sy$ for some $s \in S$, where $L(s)=0$ or $sy>y$},\\
\mbox{or if $M_{z,y}^s \neq 0$ for some $s \in S$, where 
$L(s)>0$ and $sz<z<y<sy$}.  \end{array}\right.\]
Let $\leq_{\cL}$ be the pre-order relation on $W$ generated by 
$\leftarrow_{\cL}$, that is, we have $z \leq_{\cL} y$ if there exist 
elements $z=y_0, y_1,\ldots,y_m=y$ in $W$ such that $y_{i-1} 
\leftarrow_{\cL} y_i$ for $1\leq i\leq m$. Let $\sim_{\cL}$ denote the 
associated equivalence relation; the corresponding equivalence classes 
are called the {\em left cells} of $W$.

Let $\fC$ be a left cell of $W$ (or, more generally, a union of left cells).
Then we obtain a corresponding $W$-graph as follows. We set $I(x):=\{ s 
\in S \mid sx<x\}$ for $x \in \fC$. Furthermore, if $x,y\in \fC$ and 
$s \in S$ are such that $L(s)>0$, $s \in I(x)$ and $s \not\in I(y)$, we set
\[ m_{x,y}^s:=\left\{\begin{array}{cl} 1 & \qquad \mbox{if $y=sx$},\\
-(-1)^{l(x)+l(y)}M_{x,y}^s & \qquad \mbox{if $x<y$},\\ 0 & \qquad
\mbox{otherwise}.\end{array}\right.\]
Finally, if $s \in S$ is such that $L(s)=0$, then $sw \in \fC$ for all 
$w \in \fC$, so we obtain a natural bijection $\fC \rightarrow \fC$ by left 
multiplication. It is known that these data give rise to a $W$-graph 
structure on the set $\fC$. (See \cite[\S 6]{Lusztig83}.)
\end{exmp}

\begin{rem} \label{rwgr} Let $\theta \colon A \rightarrow R$ be the unique 
$R$-linear ring homomorphism such that $\theta(\varepsilon^g)=
1$ for all $g \in \Gamma$. Then, regarding $R$ as an $A$-module via $\theta$,
we have $R \otimes_A \cH \cong R[W]$, the group algebra of $W$ over $R$.
Let $\fC$ be a left cell of $W$. Then we obtain a representation of $W$
on $[\fC]_1:=R \otimes_A [\fC]$, called a ``left cell representation'' of 
$W$. If $W$ is a finite Weyl group and $R=\Q$, the study of these 
left cell representations is of considerable interest in the representation
theory of reductive algebraic groups over finite fields; see Lusztig 
\cite{LuBook}.
\end{rem}

\begin{defn} \label{eqcell} Assume we are given two $W$-graphs with 
underlying base sets $\fX$ and $\fX'$. Then we say that these two 
$W$-graphs are {\em equivalent} if there exists a bijection 
$\fX \rightarrow \fX'$, $x \mapsto x'$, such that the map 
\[ [\fX]\rightarrow [\fX'], \qquad b_x \mapsto b_{x'},\]
is an $\cH$-module isomorphism. Similarly, if $\fC,\fC'$ are left cells
of $W$, we write $\fC \approx \fC'$ if the $W$-graphs associated with
$\fC$ and $\fC'$ are equivalent.
\end{defn}

\begin{exmp}[(Kazhdan--Lusztig \protect{\cite[\S 4]{KaLu}})] \label{star} 
Assume that we are in the equal parameter case where $\Gamma=\Z$ and $L(s)=1$
for all $s \in S$. Let $s,t \in S$ be such that $st$ has order~$3$. Let 
\[ D_R(s,t)=\{w \in W \mid \mbox{either } ws<w,wt>w \mbox{ or } 
ws>w, wt<w \}.\]
If $w \in D_R(s,t)$, then exactly one of the two elements $ws,wt$
belongs to $D_R(s,t)$; we denote it $w^*$. Thus, we obtain an involution 
\[ D_R(s,t) \rightarrow D_R(s,t), \qquad w \mapsto w^*.\]
If $\fC$ is a left cell of $W$, then it is known that either 
$\fC$ is contained in $D_R(s,t)$ or does not meet $D_R(s,t)$ at all;
see \cite[Prop.~2.4]{KaLu}. This also shows that $y^{-1}w \not\in \langle 
s,t\rangle$ for all $y\neq w$ in $\fC$. Now, if $\fC \subseteq D_R(s,t)$,
then 
\[ \fC^*=\{w^* \mid w \in \fC\}\subseteq D_R(s,t)\]
also is a left cell of $W$ (see \cite[Cor.~4.4(ii)]{KaLu}); furthermore, 
the $W$-graphs corresponding to $\fC$ and $\fC^*$ yield identical matrix 
representations of $\cH$ (see \cite[Theorem~4.2(iii)]{KaLu}). Thus, we 
have $\fC \approx \fC^*$ in the sense of Definition~\ref{eqcell}, where
the bijection is given by $w \mapsto w^*$ ($w \in \fC$).
\end{exmp}

\begin{defn}[(Cf.\ Lusztig \protect{\cite{Lu2}, \cite[14.2]{Lusztig03}})]
\label{disti} Let $w \in W$ and assume that $P_{1,w}^*\neq 0$. We define 
an element $\Delta(w)\in \Gamma_{\geq 0}$ and an integer $0\neq n_w\in 
\Z$ by the condition $\varepsilon^{\Delta(w)}\,P_{1,w}^* \equiv n_w
\bmod {\Z}[\Gamma_{<0}]$. Then we say that $w$ is {\em distinguished} (with
respect to $L$) if $\Delta(w)<\Delta(y)$ for any $y\neq w$ such that 
$P_{1,y}^*\neq 0$ and $y,w$ belong to the same left cell of $W$. We set 
\[ \cD:=\{w \in W\mid \mbox{ $w$ distinguished}\}.\]
Thus, if $w \in \cD$ and $\fC$ is the left cell containing~$w$, then the
function
\[ \{y\in \fC \mid P_{1,y}^*\neq 0\} \rightarrow \Gamma, \qquad y \mapsto
\Delta(y),\]
reaches its minimum at $w$ and $w$ is uniquely determined by this property.
(It is known that every left cell contains at least one element $y$ such 
that $P_{1,y}^* \neq 0$; see, for example, \cite[2.4.7]{geja}.)
\end{defn}

In the equal parameter case where $\Gamma=\Z$ and $L(s)=1$ for all $s\in S$
(and assuming that $W$ is finite) it is known that $w^2=1$ and $n_w=1$ for 
all $w \in \cD$; furthermore, every left cell contains a (unique) 
distinguished element. (See Lusztig \cite{Lu2}; see \cite{Fokko} for $W$ 
of non-crystallograhic type.) Hence, in particular, $\cD$ is a canonical 
set of representatives for the left cells of $W$. If $W$ is of type $A$, 
then $\cD$ consists precisely of all involutions in $W$; in general, $\cD$ 
is strictly contained in the set of involutions of $W$. 

We shall now be interested in determining the above data explicitly,
especially for groups of exceptional type. Thus, the computational 
tasks are:
\begin{itemize}
\item Given $W,L$, determine the partition of $W$ into left cells;
\item for each left cell $\fC$, determine the numbers 
$\{M_{x,y}^s\}$;
\item determine the set $\cD$ of distinguished elements (or the 
related set $\tilde{\cD}$ in Conjecture~\ref{conj42} below).
\end{itemize}
The crucial ingredient in these tasks is the computation of the
polynomials $P_{y,w}^*$. This is usually done using some known recursion 
formulae. In the next section, we discuss a variation of this recursion.

\section{Relative Kazhdan--Lusztig polynomials} \label{secrel}

We keep the general setting of the previous section. In addition, we 
shall now fix a subset $S'\subseteq S$ and consider the corresponding
standard parabolic subgroup $W'=\langle S'\rangle$. Let $X\subseteq W$ be
the set of distinguished left coset representatives of $W'$ in $W$. Every
element $w\in W$ can be written uniquely in the form $w=xu$ where $x\in X$,
$u\in W$ and $l(w)=l(x)+l(u)$; see \cite[\S 2.1]{gepf}. We shall frequently
use the following fact, due to Deodhar (see \cite[2.1.2]{gepf}). 
Let $x \in X$ and $s \in S$. Then we are in exactly one of the following
three cases:
\begin{enumerate}
\item $sx<x$ and $sx \in X$;
\item $sx>x$ and $sx \in X$;
\item $sx>x$ and $sx \not\in X$, in which case $sx=xt$ where 
$t \in S'$.
\end{enumerate}
We have a corresponding parabolic subalgebra $\cH'= \langle \tilde{T}_w
\mid w\in W'\rangle_A\subseteq \cH$. It is known that, for $w \in W'$, the 
basis element $C_w'$ lies in $\cH'$, and it is the Kazhdan--Lusztig basis 
element in $\cH'$. 

Let $y \in X$ and $v\in W'$. By \cite[Prop.~3.3]{myind}, we have a 
unique expression 
\[ C_{yv}'=\sum_{x \in X,u \in W'} p_{xu,yv}^* \tilde{T}_xC_u'\]
where $p_{yv,yv}^*=1$ and $p_{xu,yv}^*\in A_{<0}$ if $xu\neq yv$; 
furthermore, $p_{xu,yv}^*=0$ unless $xu=yv$ or $x<y$. In the proof of 
\cite[Prop.~3.3]{myind}, we have also seen that 
\[ \renewcommand{\arraystretch}{1.5} P_{xu,yv}^*=\left\{\begin{array}{cl}
P_{u,v}^* & \quad \mbox{if $x=y$},\\ \displaystyle p_{xu,yv}^*+
\sum_{\atop{w \in W'}{u<w}} P_{u,w}^* p_{xw,yv}^*& \quad 
\mbox{if $x<y$}.\end{array}\right.\]
Thus, if we have an efficient algorithm for computing the polynomials
$p_{xu,yv}^*$, then we can also determine $P_{xu,yv}^*$ and, hence,
the elements $\{M_{xu,yv}^s\}$.

\begin{prop} \label{recursp} We have the following recursion formulae
for $p_{xu,yv}^*$. 
\begin{itemize}
\item[(a)] If $y=1$, then  
\[ \renewcommand{\arraystretch}{1.2} 
p_{xu,v}^*=\left\{\begin{array}{cl} 1 & \mbox{ if $x=1$ and $u=v$},
\\ 0 & \mbox{ otherwise}.\end{array}\right.\]
\item[(b)] Now assume that $y \neq 1$ and let $s \in S$ be such that 
$sy<y$. If $L(s)=0$, then
\[ \renewcommand{\arraystretch}{1.2} p_{xu,yv}^*=\left\{\begin{array}{cl} 
p_{sxu,syv}^* & \quad \mbox{if $sx \in X$},\\ p_{xtu,syv}^* & \quad 
\mbox{if $sx\not\in X$},  \end{array}\right.\]
where $t=x^{-1}sx\in S'$ (if $sx\not\in X$). If $L(s)>0$, then
\[\renewcommand{\arraystretch}{1.5} p_{xu,yv}^*=\left\{\begin{array}{cl} 
p_{sxu,syv}^*+\varepsilon^{L(s)}p_{xu,syv}^*-\tilde{p}_{xu,yv}^s & 
\mbox{ if $sx<x$},\\ \varepsilon^{-L(s)}p_{sxu,yv}^* & \mbox{ if $sx>x$, 
$sx \in X$},\\ 0 & \mbox{ if $sx \not\in X$, $tu>u$}, \\
\begin{array}{l} \displaystyle (\varepsilon^{L(s)}+\varepsilon^{-L(s)})
p_{xu,syv}^*-\tilde{p}_{xu,yv}^s \\ \displaystyle  \quad 
+p_{xtu,syv}^*+\sum_{\atop{w \in W'}{u<w<tw}} M_{u,w}^tp_{xw,syv}^*
\end{array}& \mbox{ if $sx \not\in X$, $tu<u$}, \end{array}\right.\]
where $t=x^{-1}sx\in S'$ (if $sx\not\in X$) and 
\[\tilde{p}_{xu,yv}^s:=\sum_{\atop{z \in X, w \in W'}{x\leq z\leq sz
\text{ and } szw<zw<syv}} p_{xu,zw}^*\, M_{zw,syv}^s.\]
\end{itemize}
\end{prop}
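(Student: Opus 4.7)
The proof proceeds by comparing coefficients in the basis $\{\tilde{T}_x C_u' : x \in X,\ u \in W'\}$ of $\cH$. Part (a) is immediate: when $y=1$ we have $yv = v \in W'$ and $C_v' \in \cH'$ lies already in the relative basis, so $C_v' = \tilde{T}_1 C_v'$ and uniqueness of the expansion gives the stated values of $p_{xu,v}^*$.

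For part (b), choose $s \in S$ with $sy<y$; then $sy \in X$ by Deodhar's lemma, and since $s(syv) = yv > syv$ the standard Kazhdan--Lusztig recursion
\[\tilde{T}_s C_{syv}' = C_{yv}' - \varepsilon^{-L(s)} C_{syv}' + \sum_{y':\ sy'<y'<syv} M_{y',syv}^s\, C_{y'}'\]
(the $L(s)=0$ version collapses to $\tilde{T}_s C_{syv}' = C_{yv}'$) rearranges to express $C_{yv}'$ in terms of $\tilde{T}_s C_{syv}'$, $C_{syv}'$ and $C_{y'}'$ for $y' < syv$. The plan is to substitute the inductively known relative expansions $C_{syv}' = \sum_{z,w} p_{zw,syv}^* \tilde{T}_z C_w'$ and $C_{y'}'= \sum p_{xu,y'}^* \tilde{T}_x C_u'$, then read off the coefficient of $\tilde{T}_x C_u'$ on both sides.

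The core step is the expansion of $\tilde{T}_s \tilde{T}_z C_w'$ in the relative basis. Deodhar's lemma splits $z \in X$ into three cases: (i) $sz<z$ (forcing $sz \in X$), giving $\tilde{T}_s\tilde{T}_z = \tilde{T}_{sz} + (\varepsilon^{L(s)}-\varepsilon^{-L(s)})\tilde{T}_z$; (ii) $sz>z$ with $sz \in X$, giving $\tilde{T}_s\tilde{T}_z = \tilde{T}_{sz}$; or (iii) $sz \notin X$, in which case $sz = zt$ for some $t \in S'$ with $L(t)=L(s)$ (weights are conjugation-invariant), and $\tilde{T}_s\tilde{T}_z = \tilde{T}_z\tilde{T}_t$. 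In case (iii) one invokes the Kazhdan--Lusztig recursion a second time, now applied to $\tilde{T}_t C_w'$ inside $\cH'$, which produces precisely the inner $M^t$-sum appearing in the last subcase of the proposition. A parallel three-case analysis of the target element $x \in X$ under the action of $s$ yields the four cases of the recursion ($sx<x$; $sx>x$ and $sx \in X$; $sx \notin X$ split further by $tu>u$ or $tu<u$). The support condition $p_{xu,zw}^* = 0$ unless $xu = zw$ or $x<z$ controls which $(z,w)$ actually contribute, and the surviving contributions from the $M^s$-sum are exactly those packaged into $\tilde{p}_{xu,yv}^s$.

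The principal obstacle is the bookkeeping across nested expansions, but the key cancellations are clean: in the first subcase ($sx<x$) the identity $(\varepsilon^{L(s)}-\varepsilon^{-L(s)}) + \varepsilon^{-L(s)} = \varepsilon^{L(s)}$ fuses the $z=x$ contribution from Deodhar case (i) with the $\varepsilon^{-L(s)}C_{syv}'$ term to give the single coefficient $\varepsilon^{L(s)} p_{xu,syv}^*$; in the final subcase ($sx \notin X$, $tu<u$), using $L(t) = L(s)$, the analogous combination produces the coefficient $\varepsilon^{L(s)}+\varepsilon^{-L(s)}$ of $p_{xu,syv}^*$, while the $p_{xtu,syv}^*$ term and the $M_{u,w}^t$-sum arise directly from expanding $\tilde{T}_t C_u'$ in $\cH'$. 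The vanishing in the third subcase ($sx \notin X$, $tu>u$) comes from the fact that the only way for $\tilde{T}_x C_u'$ to appear from case (iii) is through the specific $\tilde{T}_t C_u' = \varepsilon^{L(t)}C_u'$ branch, which is excluded precisely when $tu>u$.
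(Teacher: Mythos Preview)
Your outline is correct for the first and last subcases ($sx<x$, and $sx\notin X$ with $tu<u$) and matches the paper's argument there. The gap is in the two middle subcases, where you rely on the single rearranged identity
\[
C_{yv}' \;=\; \tilde{T}_s C_{syv}' + \varepsilon^{-L(s)} C_{syv}' - \sum_{szw<zw<syv} M_{zw,syv}^s\,C_{zw}',
\]
whereas the paper brings in a \emph{second} identity for those two subcases, namely the eigenvector relation $\tilde{T}_s C_{yv}' = \varepsilon^{L(s)} C_{yv}'$ (valid because $syv<yv$). Expanding $C_{yv}'$ in the relative basis and comparing the coefficient of $\tilde{T}_x C_u'$ on both sides of this relation gives $p_{sxu,yv}^* = \varepsilon^{L(s)} p_{xu,yv}^*$ directly when $sx>x$, $sx\in X$, and gives $-\varepsilon^{-L(t)} p_{xu,yv}^* = \varepsilon^{L(s)} p_{xu,yv}^*$ when $sx\notin X$, $tu>u$; since $L(t)=L(s)$ and $\varepsilon^{2L(s)}\neq -1$, the latter forces $p_{xu,yv}^*=0$.

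Your stated reason for the vanishing in the third subcase is incorrect. When $sx\notin X$ (so $sx=xt$) and $tu>u$, the basis element $\tilde{T}_x C_u'$ \emph{does} occur in $\tilde{T}_s C_{syv}'$: with $z=x$ and $w=u$ one has $tw>w$, and then $\tilde{T}_t C_u' = C_{tu}' - \varepsilon^{-L(t)} C_u' + \cdots$ contributes $-\varepsilon^{-L(t)} p_{xu,syv}^*$ to the coefficient of $\tilde{T}_x C_u'$. This cancels against the $\varepsilon^{-L(s)} p_{xu,syv}^*$ coming from $\varepsilon^{-L(s)} C_{syv}'$, leaving you with $p_{xu,yv}^* = -\tilde{p}_{xu,yv}^s$, and nothing in your argument shows this vanishes. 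Likewise, in the second subcase your identity yields $p_{xu,yv}^* = p_{sxu,syv}^* + \varepsilon^{-L(s)} p_{xu,syv}^* - \tilde{p}_{xu,yv}^s$, which is not the stated formula $p_{xu,yv}^* = \varepsilon^{-L(s)} p_{sxu,yv}^*$ (note the second index is $yv$, not $syv$). The remedy in both subcases is to use the eigenvector identity, as the paper does.
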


\begin{proof} (a) This is contained in \cite[Prop.~3.3]{myind}. 

(b) This is essentially the same as the proofs of \cite[Theorem~5.1]{HowYi}
and \cite[Prop.~4.1]{HowYi2}. However, because of the different
normalisations and conventions, we shall sketch the main steps. Let 
$y\neq 1$ and $s \in S$ be such that $sy<y$. First assume that $L(s)=0$. 
Then $C_s'=\tilde{T}_s$ and $C_s'C_{syv}'=C_{yv}'$.
Furthermore,
\begin{align*}
C_s'C_{syv}'&=\sum_{x \in X,u \in W'} p_{xu,syv}^* \tilde{T}_s\tilde{T}_x
C_u'\\
&=\sum_{\atop{x \in X,u \in W'}{sx\in X}} p_{xu,syv}^* \tilde{T}_{sx}C_u'+
\sum_{\atop{x \in X,u \in W'}{sx\not\in X,\,sx=xt}} p_{xu,syv}^* \tilde{T}_x
\tilde{T}_tC_u'\\ &=\sum_{\atop{x \in X,u \in W'}{sx\in X}} p_{xu,syv}^* 
\tilde{T}_{sx}C_u'+ \sum_{\atop{x \in X,u \in W'}{sx\not\in X,\,sx=xt}} 
p_{xu,syv}^* \tilde{T}_xC_{tu}',
\end{align*}
where the last equality holds since $L(t)=L(s)$. This yields the desired
formulae. 

From now on, assume that $L(s)>0$. We begin by considering the identity 
$\tilde{T}_sC_{yv}'= \varepsilon^{L(s)}C_{yv}'$. The coefficient of 
$\tilde{T}_xC_u'$ on the right hand side is $\varepsilon^{L(s)}
p_{xu,yv}^*$. Now we compute 
\begin{align*} 
\tilde{T}_sC_{yv}'&=  \sum_{x \in X,u \in W'} p_{xu,yv}^* \tilde{T}_s
\tilde{T}_xC_u'\\
&=\sum_{\atop{x \in X,u\in W'}{sx<x}} p_{xu,yv}^*\tilde{T}_{sx}C_u'+
\sum_{\atop{x \in X,u\in W'}{sx<x}} p_{xu,yv}^*(\varepsilon^{L(s)}-
\varepsilon^{-L(s)})\tilde{T}_xC_u'\\ 
& \qquad +\sum_{\atop{x \in X,u \in W'}{sx>x, sx \in X}} p_{xu,yv}^* 
\tilde{T}_{sx}C_u'+\sum_{\atop{x \in X,w\in W'}{sx=xt \text{ where } 
t\in S'}} p_{xw,yv}^*\tilde{T}_x(\tilde{T}_tC_w')\\ 
&=\sum_{\atop{x \in X,u\in W'}{sx>x,sx \in X}} p_{sxu,yv}^*\tilde{T}_xC_u'+
\sum_{\atop{x \in X,u\in W'}{sx<x}} p_{xu,yv}^*(\varepsilon^{L(s)}-
\varepsilon^{-L(s)})\tilde{T}_xC_u'\\ 
& \qquad +\sum_{\atop{x \in X,u \in W'}{sx<x}} p_{sxu,yv}^* \tilde{T}_xC_u'+
\sum_{\atop{x \in X,w\in W'}{sx=xt \text{ where } t\in S'}} 
p_{xw,yv}^*\tilde{T}_x(\tilde{T}_tC_w').
\end{align*}
Thus, if $sx>x$ and $sx \in X$, then the coefficient of $\tilde{T}_x
C_u'$ in this expression is $p_{sxu,yv}^*$. Hence, we obtain $p_{sxu,yv}^*=
\varepsilon^{L(s)}p_{xu,yv}^*$ in this case, as required. 

Now assume that $sx>x$ and $sx \not\in X$. Then, among the various sums
in the above expression for $\tilde{T}_sC_{yv}'$, the term $\tilde{T}_x
C_u'$ will only appear in the sum 
\[\sum_{\atop{x \in X,w\in W'}{sx=xt \text{ where } t\in S'}} 
p_{xw,yv}^*\tilde{T}_x(\tilde{T}_tC_w').\]
If $tw<w$, then $\tilde{T}_tC_w'=\varepsilon^{L(t)}C_w'$. On the other 
hand, if $tw>w$, then $\tilde{T}_tC_w'$ is equal to $-\varepsilon^{-L(t)}
C_w'$ plus an $A$-linear combination of terms $C_{w'}'$ where $tw'<w'$. 
Hence, if $tu>u$, then the coefficient of $\tilde{T}_xC_u'$ in 
$\tilde{T}_sC_{yv}'$ will be $-\varepsilon^{-L(t)}p_{xu,yv}^*$. 
Thus, we have $-\varepsilon^{-L(t)}p_{xu,yv}^*=\varepsilon^{L(s)}
p_{xu,yv}^*$. Since $L(s)=L(t)$ and $\varepsilon^{2L(s)}\neq -1$, we 
deduce that $p_{xu,yv}^*=0$, as required.

To obtain the remaining formulae, we now consider the identity
\[ \tilde{T}_sC_{syv}'=C_{yv}'-\varepsilon^{-L(s)}C_{syv}'+
\sum_{\atop{z \in X, w \in W'}{swz<wz<syv}} M_{zw,syv}^s C_{zw}'.\]
Writing $C_{zw}'=\sum_{x \in X, u \in W'} p_{xu,zw}^* \tilde{T}_xC_u'$,
we obtain that 
\begin{align*}
\sum_{\atop{z \in X, w \in W'}{swz<wz<syv}} M_{zw,syv}^s C_{zw}'&=
\sum_{x \in X, u \in W'} \Bigl(\sum_{\atop{z \in X, w \in W'}{swz<wz<syv}} 
p_{xu,zw}^*\,M_{zw,syv}^s\Bigr)\tilde{T}_xC_u'\\&=\sum_{x \in X, 
u \in W'} \tilde{p}_{xu,yv}^s\tilde{T}_xC_u'.
\end{align*}
Thus, we have
\begin{align*}
C_{yv}'&=\tilde{T}_sC_{syv}'+\varepsilon^{-L(s)}C_{syv}'-\sum_{x \in X, 
u \in W'} \tilde{p}_{xu,yv}^s\tilde{T}_xC_u'\\
&=\tilde{T}_sC_{syv}'+\sum_{x \in X,u \in W'}\Bigl(\varepsilon^{-L(s)}
p_{xu,syv}^* -\tilde{p}_{xu,yv}^s\Bigr)\tilde{T}_xC_u'.
\end{align*}
By a similar computation as above, we have
\begin{align*}
\tilde{T}_sC_{syv}'&=\sum_{\atop{x \in X,u\in W'}{sx>x,sx \in X}} 
p_{sxu,syv}^*\tilde{T}_xC_u'+ \sum_{\atop{x \in X,u\in W'}{sx<x}} 
p_{xu,syv}^*(\varepsilon^{L(s)}-\varepsilon^{-L(s)})\tilde{T}_xC_u'\\ 
& \qquad +\sum_{\atop{x \in X, u \in W'}{sx<x}} p_{sxu,syv}^* \tilde{T}_x
C_u'+ \sum_{\atop{x \in X, w\in W'}{sx=xt \text{ where } t\in S'}} 
p_{xw,syv}^*\tilde{T}_x (\tilde{T}_tC_w').
\end{align*}
Now let $x \in X$ be such that $sx<x$. Then we conclude that 
\begin{align*}
p_{xu,yv}^*&=p_{xu,syv}^*(\varepsilon^{L(s)}-\varepsilon^{-L(s)})+
p_{sxu,syv}^*+\varepsilon^{-L(s)}p_{xu,syv}^*
-\tilde{p}_{xu,yv}^s\\ &=\varepsilon^{L(s)}p_{xu,syv}^*+p_{sxu,syv}^* 
-\tilde{p}_{xu,yv}^s,
\end{align*}
as required. Finally, assume that $sx>x$, $sx \not\in X$ and $tu<u$. Then
$p_{xu,yv}^*$ will be equal to $\varepsilon^{-L(s)}p_{xu,syv}^*-
\tilde{p}_{xu,yv}^s$ plus the coefficient of $\tilde{T}_xC_u'$ in 
\[ \sum_{\atop{x \in X, w\in W'}{sx=xt \text{ where } t\in S'}} 
p_{xw,syv}^*\tilde{T}_x (\tilde{T}_tC_w').\]
If $tw<w$, then $\tilde{T}_tC_w'=\varepsilon^{L(t)}C_w'$. On the other 
hand, if $tw>w$, then 
\[ \tilde{T}_tC_w'=C_{tw}'-\varepsilon^{-L(t)}C_w'+
\sum_{\atop{u \in W'}{tu<u<w}} M_{u,w}^t C_u'.\]
Hence, since $L(s)=L(t)$, we obtain 
\[ p_{xu,yv}^*=(\varepsilon^{L(s)}+\varepsilon^{-L(s)})p_{xu,syv}^*-
\tilde{p}_{xu,yv}^s+p_{xtu,syv}^*+ \sum_{\atop{w \in W'}{u<w<tw}} 
M_{u,w}^tp_{xw,syv}^*,\]
as desired.
\end{proof}

\begin{rem} \label{oldcase} Assume that $W'=\{1\}$. Then $X=W$ and
$P_{x,y}^*=p_{x,y}^*$ for all $x,y \in X$. For any $x \in X$ and 
$s \in S$, we have $sx \in X$ and either $sx<x$ or $sx>x$. Thus, only the 
first two out of the four cases in Proposition~\ref{recursp}(b) will occur. 
These two cases yield the known recursion formulae for the polynomials 
$P_{x,y}^*$.
\end{rem}

\begin{rem} \label{renorm1} Let us set $p_{xu,yv}=\varepsilon^{L(yv)-L(xu)}
p_{xu,yv}^*$. Then $p_{xu,yv}\in A_{\geq 0}$. Indeed, we have the 
recursion formulae:
\begin{itemize}
\item[(a)] If $y=1$, then  
\[ \renewcommand{\arraystretch}{1.2} 
p_{xu,v}=\left\{\begin{array}{cl} 1 & \mbox{ if $x=1$ and $u=v$},
\\ 0 & \mbox{ otherwise}.\end{array}\right.\]
\item[(b)] Now assume that $y \neq 1$ and let $s \in S$ be such that 
$sy<y$. If $L(s)=0$, then
\[ \renewcommand{\arraystretch}{1.2} p_{xu,yv}=\left\{\begin{array}{cl} 
p_{sxu,syv} & \quad \mbox{if $sx \in X$},\\ p_{xtu,syv} & \quad 
\mbox{if $sx\not\in X$},  \end{array}\right.\]
where $t=x^{-1}sx\in S'$ (if $sx\not\in X$). If $L(s)>0$, then
\[ \renewcommand{\arraystretch}{1.5} p_{xu,yv} =\left\{
\begin{array}{c@{\hspace{2mm}}l} 
p_{sxu,syv}+\varepsilon^{2L(s)}p_{xu,syv}-\hat{p}_{xu,yv}^s & 
\mbox{if $sx<x$},\\ p_{sxu,yv} & \mbox{if $sx>x$, $sx \in X$},\\ 
0 & \mbox{if $sx \not\in X$, $tu>u$}, \\
\begin{array}{l} \displaystyle (\varepsilon^{2L(s)}{+}1)p_{xu,syv}-
\hat{p}_{xu,yv}^s +\varepsilon^{2L(s)}p_{xtu,syv} \\[2mm] 
\displaystyle  \quad + \sum_{\atop{w \in W'}{u<w<tw}} 
\varepsilon^{L(tw)-L(u)}M_{u,w}^t p_{xw,syv} 
\end{array}& \mbox{if $sx \not\in X$, $tu<u$}, \end{array}\right.\]
where $t=x^{-1}sx\in S'$ (if $sx\not\in X$) and 
\[\hat{p}_{xu,yv}^s=\sum_{\atop{z \in X, w \in W'}{x\leq z \leq sy
\text{ and } szw<zw<syv}} \varepsilon^{L(syv)-L(zw)}\,p_{xu,zw}\, 
(\varepsilon^{L(s)}M_{zw,syv}^s).\]
\end{itemize}
With this renormalisation, it also follows that 
\[ \renewcommand{\arraystretch}{1.5} P_{xu,yv}=\left\{\begin{array}{cl}
P_{u,v} & \quad \mbox{if $x=y$},\\ \displaystyle p_{xu,yv}+
\sum_{\atop{w \in W'}{u<w}} P_{u,w} p_{xw,yv}& \quad
\mbox{if $x<y$}.\end{array}\right.\]
\end{rem}
 
\begin{lem} \label{lemmue} Let $x,y \in X$, $u,v \in W'$ and $s \in S$ be 
such that $L(s)>0$ and $sxu<xu<yv<syv$. Let 
\[\pi_{xu,yv}^s:=\sum_{\atop{z \in X, w \in W'}{x<z\leq y \text{ and }
szw<zw<yv}} p_{xu,zw}^*\, M_{zw,yv}^s.\]
Then the following hold. If $sx<x$, then 
\begin{equation*}
M_{xu,yv}^s-\varepsilon^{L(s)}p_{xu,yv}^*+\pi_{xu,yv}^s \in A_{<0};
\tag{$\text{M1}^\prime$}
\end{equation*}
on the other hand, if $sx \not\in X$ and $tu<u$ (where $t=x^{-1}sx\in S'$), 
then 
\begin{equation*}
M_{xu,yv}^s-\varepsilon^{L(s)}p_{xu,yv}^*+\pi_{xu,yv}^* 
-\sum_{\atop{w \in W'}{u<w<tw}} M_{u,w}^t p_{xw,yv}^* \in A_{<0}.
\tag{$\text{M1}^{\prime\prime}$}
\end{equation*}
Note that these conditions, together with the symmetry condition
$\overline{M}_{xu,yv}^s=M_{xu,yv}^s$, determine $M_{xu,yv}^s$
inductively.
\end{lem}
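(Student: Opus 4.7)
The plan is to deduce $\text{M1}'$ and $\text{M1}''$ from Lusztig's original condition (M1) for $M^s_{xu,yv}$ (recalled in Section~\ref{secwgr}) by rewriting the absolute Kazhdan--Lusztig polynomials appearing there in terms of the relative polynomials $p^*$, via the conversion formula
\[ P^*_{xu,zw} = \begin{cases} P^*_{u,w} & \text{if } x = z, \\ \displaystyle p^*_{xu,zw} + \sum_{\substack{w_1 \in W' \\ u < w_1}} P^*_{u,w_1}\, p^*_{xw_1,zw} & \text{if } x < z, \end{cases} \]
stated just before Proposition~\ref{recursp}. Parametrising each summation index $Z \in W$ in (M1) as $Z = zw$ with $z \in X$, $w \in W'$, the proof reduces to a careful bookkeeping of how the pieces recombine modulo $A_{<0}$.

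By Deodhar's compatibility of the Bruhat order with the distinguished coset decomposition, the inequality $xu < Z = zw$ forces $x \leq z$, so the correction sum in (M1) splits into a ``$z = x$'' piece and a ``$z > x$'' piece. For the ``$z > x$'' piece, substituting the second line of the conversion formula produces a leading term whose indexing range (after imposing $sZ < Z$) coincides with that of $\pi^s_{xu,yv}$, together with cross-terms of the shape $P^*_{u,w_1}\, p^*_{xw_1,zw}\, M^s_{zw,yv}$. These cross-terms combine with the corresponding piece of the expansion of $\varepsilon^{L(s)} P^*_{xu,yv}$ and, by the inductive hypothesis applied via (M1) to each $M^s_{xw_1,yv}$, jointly lie in $A_{<0}$.

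For the ``$z = x$'' piece, one uses $P^*_{xu,xw} = P^*_{u,w}$ and analyses when $s(xw) < xw$. If $sx < x$, this holds automatically for every relevant $w$, and the resulting contribution cancels against the leftover $\varepsilon^{L(s)}p^*_{xu,yv}$ term modulo $A_{<0}$, which yields $\text{M1}'$. If instead $sx \notin X$, so that $sx = xt$ with $t = x^{-1}sx \in S'$, the condition $s(xw) < xw$ becomes $tw < w$; invoking the intra-$W'$ instance of (M1) for $M^t_{u,w}$ inside the parabolic subalgebra $\cH'$ then produces precisely the extra term $\sum_{u < w < tw} M^t_{u,w}\, p^*_{xw,yv}$ appearing in $\text{M1}''$. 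The main obstacle will be the bookkeeping in this second case, where one must align the $W'$-internal recursion for $M^t_{u,w}$ with the outer recursion for $M^s_{xu,yv}$ so that the leftover terms indeed land in $A_{<0}$. Finally, uniqueness of $M^s_{xu,yv}$ subject to $\text{M1}'$ (or $\text{M1}''$) together with (M2) is the standard observation that a bar-invariant element of $\Z[\Gamma]$ is determined by its residue modulo $A_{<0}$.
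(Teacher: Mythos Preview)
Your route is genuinely different from the paper's. The paper does \emph{not} unwind (M1) through the conversion formula; instead it takes the identity
\[
C'_{syv}=\varepsilon^{-L(s)}C'_{yv}+\tilde{T}_sC'_{yv}-\sum_{sZ<Z<yv}M^s_{Z,yv}\,C'_Z
\]
and compares the coefficient of $\tilde{T}_xC'_u$ on both sides, reusing the expansion of $\tilde{T}_sC'_{yv}$ from the proof of Proposition~\ref{recursp}. This gives the left-hand side of $(\text{M1}')$ \emph{equal} to $p^*_{sxu,yv}-p^*_{xu,syv}$ (and an analogous explicit expression in the $(\text{M1}'')$ case), hence in $A_{<0}$ on the nose, with no induction. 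What your approach buys is that one never leaves the realm of (M1); what the paper's approach buys is a one-line explicit remainder.

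Your sketch can be made to work, but it skips two points that are not automatic. First, after regrouping by $w_1>u$, the bracketed expression attached to $w_1$ is only controlled by the inductive instance of $(\text{M1}')$/$(\text{M1}'')$ when $xw_1<yv$ and (in the second case) $tw_1<w_1$. You say nothing about the remaining $w_1$. These degenerate contributions do vanish or lie in $A_{<0}$, but this needs separate justification: for example, if $sx\notin X$ and $tw_1>w_1$, one checks from $\tilde{T}_sC'_Z=\varepsilon^{L(s)}C'_Z$ (for $sZ<Z$) that $p^*_{xw_1,Z}=0$, so $\pi^s_{xw_1,yv}=0$; and if $xw_1\not<yv$ one uses $P^*_{xw_1,\,\cdot}=0$ together with the conversion formula to run a subsidiary downward induction on $l(w_1)$. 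Second, your one-line appeal to ``the intra-$W'$ instance of (M1) for $M^t_{u,w}$'' hides a genuine regrouping: after applying the inductive $(\text{M1}'')$ to each $tw_1<w_1$ term you are left with a double sum $\sum_{w_1}P^*_{u,w_1}\sum_{w'}M^t_{w_1,w'}p^*_{xw',yv}$ plus the $tw_1>w_1$ piece $-\varepsilon^{L(s)}\sum P^*_{u,w_1}p^*_{xw_1,yv}$; only after swapping the order of summation and fixing $w'$ with $tw'>w'$ does one recognise the $W'$-relation (M1) for $M^t_{u,w'}$ and obtain the coefficient $-M^t_{u,w'}$ modulo $A_{<0}$. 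None of this is deep, but as written your proposal asserts the cancellation rather than exhibiting it.
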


\begin{proof} We consider the identity
\begin{equation*}
C_{syv}'=\varepsilon^{-L(s)}C_{yv}'+\tilde{T}_sC_{yv}'-\sum_{\atop{z\in X,
w \in W'}{swz<wz<yv}} M_{zw,yv}^s C_{zw}'.\tag{$\dagger$}
\end{equation*}
The coefficient of $\tilde{T}_xC_u'$ on the left hand side is 
$p_{xu,syv}^*$. Assume first that $sx<x$. Then, arguing as in the proof 
of Proposition~\ref{recursp}, we find that the coefficient of $\tilde{T}_x
C_u'$ on the right hand side of ($\dagger$) is 
\[ p_{sxu,yv}^*+\varepsilon^{L(s)}p_{xu,yv}^*-\tilde{p}_{xu,syv}^s.\] 
Now we note that 
\[ \tilde{p}_{xu,syv}^s=\sum_{\atop{z \in X, w \in W'}{szw<zw<yv}} 
p_{xu,zw}^*\, M_{zw,yv}^s=M_{xu,yv}^s+\pi_{xu,yv}^s.\]
Thus, we conclude that 
\[M_{xu,yv}^s-\varepsilon^{L(s)}p_{xu,yv}^*+\pi_{xu,yv}^s=
\tilde{p}_{xu,syv}^s-\varepsilon^{L(s)}p_{xu,yv}^*=p_{sxu,yv}^*-
p_{xu,syv}^*.\]
This expression lies in $A_{<0}$; thus, we have shown that 
($\text{M1}^\prime$) holds. On the other hand, if $sx \not\in X$ and $tu<u$ 
(where $t=x^{-1}sx\in S'$), then the coefficient of $\tilde{T}_xC_u'$ on 
the right hand side of ($\dagger$) is 
\[(\varepsilon^{L(s)}+\varepsilon^{-L(s)}) p_{xu,yv}^*-\tilde{p}_{xu,syv}^s 
+ p_{xtu,yv}^*+\sum_{\atop{w \in W'}{u<w<tw}} M_{u,w}^tp_{xw,yv}^*.\]
Then a similar argument shows that ($\text{M1}^{\prime\prime}$) holds. 
\end{proof}

\begin{exmp} \label{muexp} Let $x,y \in X$, $u,v \in W'$ and $s \in S$ be 
such that $L(s)>0$ and $sxu<xu<yv<syv$. Assume now that $x=y$. First of all,
this forces that $u<v$ and $\pi_{xu,yv}^s=0$. Furthermore, we must
have $sx=sy\not\in X$. Thus, condition ($\text{M1}^{\prime\prime}$) in
Lemma~\ref{lemmue} yields that 
\[ M_{xu,yv}^s -\varepsilon^{L(s)}p_{xu,xv}^*- \sum_{\atop{w \in 
W'}{u<w<tw}} M_{u,w}^t p_{xw,xv}^*\in A_{<0}.\]
Now, recall that for any $w \in W$, we have $p_{xw,xv}^*=0$ unless $w=v$. 
It follows that 
\[ M_{xu,xv}^s=M_{u,v}^t \qquad \mbox{(where $t=x^{-1}sx \in S'$)}.\]
(This shows, in particular, that we can have $M_{xu,yv}^s\neq 0$ even
if $p_{xu,yv}^*=0$.)
\end{exmp}

\begin{rem} \label{muequal} Assume that $\Gamma=\Z$ and $L(s)=1$ for 
all $s \in S$ (equal parameter case). Then $A$ is the ring of Laurent 
polynomials in the indeterminate $\varepsilon$. Let $s \in S$, $x,y 
\in X$ and $u,v \in W'$ be such that $sxu<xu<yv<syv$. Then 
\begin{align*}
M_{xu,yv}^s&=\mbox{ coefficient of $\varepsilon^{-1}$ in }
\left\{\begin{array}{cl} P_{u,v}^*& \mbox{ if $x=y$},\\
p_{xu,yv}^* & \mbox{ if $x\neq y$}.  \end{array}\right.\\
&=\mbox{ coefficient of $\varepsilon^{L(yv)-L(xu)-1}$ in }
\left\{\begin{array}{cl} P_{u,v}& \mbox{ if $x=y$},\\
p_{xu,yv} & \mbox{ if $x\neq y$}.  \end{array}\right.
\end{align*}
This is easily seen using the formulae in Lemma~\ref{lemmue} and
Example~\ref{muexp}; note also that Remark~\ref{eqp} already shows 
that $M_{xu,yw}^s\in \Z$ in this case.
\end{rem}

\begin{cor} \label{recursp1} Let $\fC'$ be a left cell of $W'$. 
Then we have the following recursion formulae for $p_{xu,yv}^*$ where
$x,y\in X$ and $u,v \in \fC'$. 
\begin{itemize}
\item[(a)] If $y=1$, then  
\[ \renewcommand{\arraystretch}{1.2} 
p_{xu,v}^*=\left\{\begin{array}{cl} 1 & \mbox{ if $x=1$ and $u=v$},
\\ 0 & \mbox{ otherwise}.\end{array}\right.\]
\item[(b)] Now assume that $y \neq 1$ and let $s \in S$ be such that 
$sy<y$. If $L(s)=0$, then
\[ \renewcommand{\arraystretch}{1.2} p_{xu,yv}^*=\left\{\begin{array}{cl} 
p_{sxu,syv}^* & \quad \mbox{if $sx \in X$},\\ p_{xtu,syv}^* & \quad 
\mbox{if $sx\not\in X$},  \end{array}\right.\]
where $t=x^{-1}sx\in S'$ (if $sx\not\in X$). If $L(s)>0$, then
\[\renewcommand{\arraystretch}{1.5} p_{xu,yv}^*=\left\{\begin{array}{cl} 
p_{sxu,syv}^*+\varepsilon^{L(s)}p_{xu,syv}^*-\tilde{p}_{xu,yv}^s & 
\mbox{ if $sx<x$},\\ \varepsilon^{-L(s)}p_{sxu,yv}^* & \mbox{ if $sx>x$, 
$sx \in X$},\\ 0 & \mbox{ if $sx \not\in X$, $tu>u$}, \\
\begin{array}{l} \displaystyle (\varepsilon^{L(s)}+\varepsilon^{-L(s)})
p_{xu,syv}^*-\tilde{p}_{xu,yv}^s \\ \displaystyle  \quad 
+\underbrace{p_{xtu,syv}^*}_{\text{only if $tu\in \fC'$}}+
\sum_{\atop{w \in \fC'}{u<w<tw}} M_{u,w}^tp_{xw,syv}^* \end{array}& 
\mbox{ if $sx \not\in X$, $tu<u$}, \end{array}\right.\]
where $t=x^{-1}sx\in S'$ (if $sx\not\in X$) and 
\[\tilde{p}_{xu,yv}^s:=\sum_{\atop{z \in X, w \in \fC'}{x \leq z \leq
sy \text{ and } zw<zw<syv}} p_{xu,zw}^*\, M_{zw,syv}^s.\]
\end{itemize}
\end{cor}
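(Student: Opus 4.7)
The plan is to deduce the corollary from Proposition~\ref{recursp} by showing that, when $u, v \in \fC'$, each ``extra'' term of the Proposition's formula (those involving some auxiliary element $w \in W' \setminus \fC'$) vanishes. I proceed by strong induction on $l(yv)$: part~(a) is Proposition~\ref{recursp}(a), so I concentrate on part~(b).

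The case $L(s) = 0$ is immediate: by Definition~\ref{defwgr}(c) applied to the $W'$-graph carried by $\fC'$, the map $u \mapsto tu$ (for $t \in S'$ with $L(t) = L(s) = 0$) is a bijection of $\fC'$, so the recursion stays within $\fC'$. For $L(s) > 0$, the subcases $sx > x, sx \in X$ and $sx \not\in X, tu > u$ involve no sum over $w$ and need no further argument. The remaining two subcases ($sx < x$, and $sx \not\in X$ with $tu < u$) exhibit three kinds of discrepancy between the Proposition's formula and the Corollary's: (i) the term $p^*_{xtu, syv}$, which the Corollary drops when $tu \not\in \fC'$; (ii) terms $M^t_{u,w}\, p^*_{xw, syv}$ for $w \in W' \setminus \fC'$ with $u < w < tw$; and (iii) terms $p^*_{xu, zw}\, M^s_{zw, syv}$ inside $\tilde{p}^s_{xu, yv}$ with $w \in W' \setminus \fC'$.

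Each of (i)--(iii) would follow from a single auxiliary vanishing claim: if $v \in \fC'$ and $w' \in W' \setminus \fC'$ satisfies $w' \geq_{\cL} v$ strictly in the left-cell preorder on $W'$, then $p^*_{xw', y'v} = 0$ for all $x, y' \in X$. Indeed, in (i) the identity $u = t(tu)$ with $t \cdot tu > tu$ gives $u \leftarrow_{\cL} tu$ in $W'$, hence $tu \geq_{\cL} u \sim_{\cL} v$; in (ii), $M^t_{u,w} \neq 0$ with $u < w < tw$ gives $u \leftarrow_{\cL} w$, hence $w \geq_{\cL} v$; and (iii) is similar, inspecting the inner $W'$-component of $zw$ using $p^*_{xu, zw} \neq 0$.

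The main obstacle will be the auxiliary vanishing claim itself, which I would prove by a nested induction using Proposition~\ref{recursp}(b), tracking the $\leq_{\cL}$-comparability of the inner $W'$-variable to $v$ through each case of the recursion. The delicate point is to show that none of the four cases of Proposition~\ref{recursp}(b) can produce a nonzero $p^*_{xw', y'v}$ with $w'$ strictly above $v$ in $\leq_{\cL}$, particularly in the fourth case where new sums over $W'$ appear; this must be argued without invoking positivity or Lusztig's $\ba$-function, in keeping with the paper's standing assumption. Once the auxiliary vanishing is in place, the Corollary follows by routine bookkeeping against Proposition~\ref{recursp}(b).
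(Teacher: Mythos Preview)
Your strategy---derive the corollary from Proposition~\ref{recursp} by killing the terms indexed by $w \in W'\setminus\fC'$---is exactly the paper's approach. The paper, however, does not re-prove the vanishing by a nested induction; it simply cites two facts from \cite{myind}: (1) $p^*_{xu',y'v'}=0$ unless $xu'=y'v'$ or $u'\leq_{\cL}' v'$ (\cite[Prop.~3.3]{myind}), and (2) $xu'\leq_{\cL} y'v' \Rightarrow u'\leq_{\cL}' v'$ (\cite[\S 4]{myind}). Your ``auxiliary vanishing claim'' is precisely the relevant special case of~(1), so the inductive re-derivation you sketch is unnecessary work given that this fact is already established and used earlier in the section.

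More importantly, your handling of item~(iii) has a genuine gap. From $p^*_{xu,zw}\neq 0$ and fact~(1) you only get $u\leq_{\cL}' w$; when $w\notin\fC'$ this forces $w>_{\cL}' v$ strictly, but that does \emph{not} make $p^*_{xu,zw}$ vanish. To kill such a term you must show $M^s_{zw,syv}=0$, and this is where fact~(2) is essential: $M^s_{zw,syv}\neq 0$ gives $zw\leq_{\cL} syv$ in $W$, hence $w\leq_{\cL}' v$ in $W'$ by~(2), contradicting $w>_{\cL}' v$. Your phrase ``(iii) is similar'' glosses over exactly the place where the second cited fact enters; your auxiliary claim (which encodes only fact~(1)) is insufficient on its own. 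So either invoke \cite[\S 4]{myind} directly, or recognise that your proposed induction would in effect have to reprove it.
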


\begin{proof} This immediately follows from Proposition~\ref{recursp}
and the following facts. Let $\leq_{\cL}'$ be the Kazhdan--Lusztig left 
pre-order relation on $W'$. By \cite[Prop.~3.3]{myind}, we have 
$p_{xu,yv}^*=0$ unless $xu=yv$ or $x<y$ and $u \leq_{\cL}' v$; furthermore, 
by \cite[\S 4]{myind}, we have the implication ``$xu \leq_{\cL} yv 
\Rightarrow u \leq_{\cL}' v$''. 
\end{proof}

\begin{algo} \label{varalgo} The following inductive procedure produces 
the partition of $W$ into left cells and the associated $W$-graphs. 

If $W=\{1\}$, then $\fC=\{1\}$ is the only left cell and there is
a canonical $W$-graph associated with it. Now assume that $W \neq \{1\}$
and let $W'\subsetneqq W$ be a proper parabolic subgroup. By induction,
we obtain the partition $W'=\fC_1' \amalg \ldots \amalg \fC_k'$ of $W'$ 
into left cells and the associated $W$-graphs. Now fix $i \in \{1,\ldots,k\}$.
Then, by the main result of \cite{myind}, the set $X\fC_i'$ is a union
of left cells, that is, we have
\[ X\fC_i'=\fC_{i,1} \amalg \fC_{i,2} \amalg \ldots \amalg \fC_{i,k_i}\]
where $\fC_{i,1},\fC_{i,2}\ldots,\fC_{i,k_i}$ are left cells of $W$ and
$X$ is the set of distinguished left coset representatives of $W'$ in $W$.
These left cells are determined as follows. By Corollary~\ref{recursp1} 
and Lemma~\ref{lemmue}, there is a recursive algorithm for simultaneously 
computing the polynomials 
\[ \{ p_{xu,yv}^* \} \quad \mbox{and} \quad \{ M_{xu,yv}^s\}
\qquad (x,y \in X, \; u,v \in \fC_i').\]
(The computations will only involve the elements in the fixed set 
$X\fC_i'$.) Once this is achieved, the set $X\fC_i'$ is decomposed
into left cells by the procedure in Example~\ref{explcell} (that is,
by explicitly working out the transitive closure of the relation 
$\leftarrow_{\cL}$); this also yields automatically the associated 
$W$-graphs. By letting $i$ run over all indices in $\{1,\ldots,k\}$ we 
eventually obtain all the left cells of $W$ and the associated $W$-graphs.
\end{algo}

\begin{table}[htbp] \caption{Examples of computations of left cells
(equal parameters)} \label{explc} {\small
\begin{center}
$\begin{array}{c@{\hspace{1mm}}c@{\hspace{1mm}}c@{\hspace{1mm}}
c@{\hspace{1mm}}c@{\hspace{2mm}}c@{\hspace{2mm}}c@{\hspace{2mm}}c} \hline 
W & |W| & W' & |X| & \max\{|X\fC'|\} & \# \text{left cells} 
& \# (\text{left cells}/{\approx}) & \text{time}\\ \hline
I_2(5) &   10 & A_1 & 5 & 5 & 4 & 4 & \text{0.01s} \\
H_3 &     120 & I_2(5) & 12 & 48 & 22 & 15 & \text{0.2s}  \\
D_4 &     192 & A_3 &8 & 24 & 36 & 12 & \text{0.1s}\\  
F_4 &    1152 & B_3 &24  &120 & 72 & 29  & \text{1s} \\
D_5 &    1920 & D_4 &10& 140 & 126 & 16 & \text{0.7s} \\
H_4 &   14400 & H_3 & 120 & 960 & 206 & 90  & \text{370s}\\
D_6 &   23040 & D_5 & 12&300 &  578 & 34 &  \text{10s}\\
E_6 &   51840 & D_5 & 27 & 675& 652 & 21 & \text{45s} \\
D_7 &   322560 & D_6 & 14 &1190 & 2416 & 49 & \text{190s}\\
A_8 &   362880 & A_7 & 9 & 810 & 2620 & 30 & \text{140s}\\ 
E_7 &  2903040 & D_6 & 126 &10710 & 6364 & 56 &\text{4h}\\ 
D_8 &  5160960 & D_7 & 16 & 3696& 11504 & 90 & \text{4h}\\
B_8 & 10321920 & B_7 & 16 & 8848 & 15304 & 346 & \text{58h}\\ \hline
\multicolumn{8}{l}{\text{($B_8$ requires 9GB main memory; in all 
other cases, 4GB are sufficient.)}}
\end{array}$
\end{center}}
\end{table}

In {\sf PyCox}, the function {\tt klcells} implements the procedure in 
Algorithm~\ref{varalgo}. As may be expected this leads to significant 
efficiency improvements compared with the use of ordinary Kazhdan--Lusztig 
polynomials (where the recursion involves {\it all}
elements of $W$). In the equal parameter case, one can apply some further 
simplications to reduce the number of left cells that have to be 
``induced'' from $W'$ to $W$: First of all, it is sufficient to induce 
only one left cell from each pair of left cells which are related by 
multiplication with the longest element in $W'$. (This follows from 
Yin \cite{yin1}.) Furthermore, assume that $i_1,i_2 \in \{1,\ldots,k\}$ 
are such that $\fC_{i_1}' \approx \fC_{i_2}'$ in the sense of 
Definition~\ref{eqcell}. Then, by \cite[Cor.~3.10]{myrel}, it is known 
that, for a suitable labelling, we have $k_{i_1}=k_{i_2}$ and 
$\fC_{i_1,j} \approx \fC_{i_2,j}$ for all $j \in \{1,\ldots,k_{i_1}\}$. 
Thus, it is sufficient to induce one left cell from each orbit under 
the star operations in Example~\ref{star}, and then to apply the star
operations to the resulting cells of $W$. This leads again to an 
enormous gain in efficiency. For example, in the computation of the 
left cells for type $E_7$, we only need to induce $34$ (instead of a 
total of $578$) left cells from a parabolic subgroup of type $D_6$; 
see Table~\ref{explc}. The efficiency also depends on the choice of
$W'$. For example, in type $E_7$ it is more efficient to use $W'$ of
type $D_6$ than of type $E_6$; in all other cases, we have chosen $W'$ 
such that the index $|W:W'|$ is as small as possible. Finally note that, if 
one is only interested in the partition of the group into left cells, then 
there are further techniques available; see, for example, Chen--Shi 
\cite{chenshi}.

One of the main advantages of being able to compute left cells and the 
corresponding $W$-graphs in a language like {\sf Python} lies in the fact 
that it provides immediate functionality for further handling of the data. 
We shall see a concrete example of this in the next section. 

\section{Leading coefficients of character values} \label{secJ}

We keep the general setting  of the previous sections; we assume now
that $W$ is finite and let $R=\R$. It is known that this is a splitting 
field for $W$ (see \cite[6.3.8]{gepf}). Let $\Irr(W)$ denote the set of 
simple $\R[W]$-modules (up to isomorphism). Let $K$ be the field of 
fractions of $A$ and $\cH_K=K \otimes_A \cH$. Then it is known that 
$\cH_K$ is split semisimple and abstractly isomorphic to $K[W]$ (see 
\cite[9.3.5]{gepf}); furthermore, the map $\varepsilon^g \mapsto 1$ 
($g\in\Gamma$) induces a bijection between $\Irr(\cH_K)$ and $\Irr(W)$ 
(see \cite[8.1.7]{gepf}). Given $E \in \Irr(W)$, we denote by 
$E_\varepsilon$ the corresponding irreducible representation of $\cH_K$.
It is known that 
\[\mbox{trace}(\tilde{T}_w,E_\varepsilon) \in \R[\Gamma] \qquad
\mbox{for all $w \in W$}\]
(see \cite[9.3.5]{gepf}). Thus, we can define 
\[ \ba_E:=\min\{g \in \Gamma_{\geq 0} \mid \varepsilon^g\, 
\mbox{trace}(\tilde{T}_w,E_\varepsilon) \in \R[\Gamma_{\geq 0}]
\mbox{ for all $w \in W$}\}.\]
Consequently, there are unique numbers $c_{w,E} \in \R$ 
($w \in W$) such that 
\[\varepsilon^{\ba_E}\,\mbox{trace}(\tilde{T}_w,E_\varepsilon)
=(-1)^{l(w)}\,c_{w,E}+\mbox{``higher terms''},\]
where ``higher terms'' means an $\R$-linear combination of terms 
$\varepsilon^g$ where $g>0$. These numbers are the ``leading coefficients 
of character values'', as defined and studied by Lusztig \cite{LuBook}, 
\cite{Lu4}, \cite{Lusztig03}. Since $\mbox{trace}(\tilde{T}_w,
E_\varepsilon)=\mbox{trace}(\tilde{T}_{w^{-1}},E_\varepsilon)$ for all 
$w \in W$ (see \cite[8.2.6]{gepf}), we certainly have 
\[ c_{w,E}=c_{w^{-1},E} \qquad \mbox{for all $w \in W$}.\]
Given $E$, there is at least one $w \in W$ such that $c_{w,E}\neq 0$
(by the definition of $\ba_E$). Hence, the sum of all $c_{w,E}^2$ 
($w\in W$) will be strictly positive and so we can write that sum as
$f_E\,\dim E$ where $f_E\in \R$ is strictly positive. In fact, we have 
the following orthogonality relations (see \cite[Exc.~9.8]{gepf}): 
\[ \sum_{w \in W} c_{w,E}\, c_{w,E'}= \left\{\begin{array}{cl} 
f_E \dim E & \quad \mbox{if $E \cong E'$},\\ 0 & \quad \mbox{otherwise}.  
\end{array}\right.\]
The connection with left cells is given by the following result,
first proved by Lusztig \cite[5.7]{LuBook}, \cite[3.5]{Lu4} in the equal 
parameter case (where the proof ultimately relies upon a geometric 
interpretation of the basis $\{C_w'\}$ of $\cH$); the general case (where
no geometric interpretation is available) is proved by an elementary 
argument in \cite[3.5, 3.8]{mylast}. Given $E \in \Irr(W)$ and a left cell 
$\fC$ of $W$, we denote by $m(\fC,E)$ the multiplicity of $E$ as an 
irreducible constituent of the left cell module $[\fC]_1$ (as defined in 
Remark~\ref{rwgr}).

\begin{prop} \label{prop41} Let $E \in \Irr(W)$ and $\fC$ be a
left cell. 
\begin{itemize}
\item[(a)] Let also $E' \in \Irr(W)$. Then 
\[ \sum_{w \in \fC} c_{w,E}\,c_{w,E'}=\left\{\begin{array}{cl} f_E\,
m(\fC,E) & \quad \mbox{if $E \cong E'$},\\ 0 & \quad 
\mbox{otherwise}.\end{array} \right.\]
\item[(b)] If $c_{w,E}\neq 0$ for some $w \in \fC$, 
then we also have $w^{-1}\in \fC$.
\end{itemize}
\end{prop}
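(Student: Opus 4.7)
The overall strategy is to work in $\cH_K = K \otimes_A \cH$, where $K$ denotes the field of fractions of $A$. Over $K$, the algebra $\cH_K$ is split semisimple with irreducibles $\{E_\varepsilon : E \in \Irr(W)\}$, and each left cell module $[\fC]_K := K \otimes_A [\fC]$ decomposes as
\[ [\fC]_K \;\cong\; \bigoplus_{E \in \Irr(W)} m(\fC, E)\, E_\varepsilon. \]
Hence at the level of characters on $\cH$ one has $\chi_{[\fC]_K}(\tilde{T}_w) = \sum_E m(\fC, E)\, \chi_{E_\varepsilon}(\tilde{T}_w)$, which will be the fundamental link between multiplicities and leading coefficients.

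For part (a), the key step is to compute $\chi_{[\fC]_K}(\tilde{T}_w)$ directly in the $W$-graph basis $\{b_y : y \in \fC\}$ of $[\fC]$. Since the action of $\cH$ on $[\fC]$ is entirely prescribed by the $W$-graph data of Definition~\ref{defwgr}, one can read off the diagonal matrix entries of $\tilde{T}_w$ in this basis and determine the $\varepsilon^{\ba_E}$-leading coefficient of the resulting trace. Pairing this expression with another irreducible character $\chi_{E'_\varepsilon}$ via the symmetrizing trace on $\cH_K$ and invoking the standard Schur orthogonality relations localises the global identity $\sum_{w \in W} c_{w, E} c_{w, E'} = f_E \dim(E)\, \delta_{E, E'}$ to the single cell $\fC$, with the Kronecker delta on the right becoming $m(\fC, E)$ when $E \cong E'$. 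Extracting the $\varepsilon^{\ba_E+\ba_{E'}}$-leading coefficients on both sides then yields the formula of (a).

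For part (b), first apply (a) with $E' = E$: the sum $\sum_{y \in \fC} c_{y, E}^2$ is a sum of real squares, at least one of which ($c_{w,E}^2$) is positive, so $f_E\, m(\fC, E) > 0$ and hence $m(\fC, E) > 0$. Next, combine the symmetry $c_{w, E} = c_{w^{-1}, E}$ with the anti-involution of $\cH$ sending $\tilde{T}_w$ to $\tilde{T}_{w^{-1}}$, which exchanges left with right cells. The nonvanishing of $c_{w, E}$ forces $w$ to lie in the two-sided cell attached to $E$; within that two-sided cell, the left cell of $w^{-1}$ coincides with the inverse of the right cell of $w$, and the symmetry of the $W$-graph construction together with the cell containing $w$ identifies both with $\fC$, giving $w^{-1} \in \fC$.

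The principal obstacle is the $W$-graph trace computation underlying (a): one must verify that the $\varepsilon^{\ba_E}$-leading coefficients of the diagonal matrix entries of $\tilde{T}_w$ on $[\fC]_K$ correspond, after Schur decomposition, to the individual $c_{w, E}$'s in precisely the right normalisation, with no spurious contributions from off-diagonal entries. The subtlety in (b) is to rule out $w^{-1}$ belonging to a different left cell than $w$: the anti-involution argument must be sharp enough to identify the relevant two-sided cell unambiguously, all without invoking Lusztig's $\ba$-function or positivity results, in keeping with the paper's stated restriction to elementary methods.
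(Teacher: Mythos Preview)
The paper does not actually prove this proposition in the text: it attributes part~(a) and~(b) to Lusztig \cite[5.7]{LuBook}, \cite[3.5]{Lu4} in the equal parameter case (relying on positivity/geometry) and to \cite[3.5, 3.8]{mylast} in general, where an elementary argument is given. So there is no in-paper proof to compare against, only those references; the elementary proof in \cite{mylast} works with the $C'$-basis and the leading term of the Schur element rather than with a direct $W$-graph trace computation.

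That said, your sketch has two genuine gaps that would need to be closed regardless of the route.

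For part~(a), the step ``localises the global identity to the single cell $\fC$'' is precisely the content of the proposition and is not justified by what you wrote. The Schur orthogonality relations for the symmetrising form on $\cH_K$ sum over \emph{all} of $W$; pairing $\chi_{[\fC]_K}$ with $\chi_{E'_\varepsilon}$ gives you $\sum_E m(\fC,E)\langle \chi_{E_\varepsilon},\chi_{E'_\varepsilon}\rangle$, i.e.\ $m(\fC,E')$ times a Schur element, but the pairing itself is still a sum over $W$, not over $\fC$. To get a sum restricted to $w\in\fC$ you need an input that actually sees the cell, for instance that the diagonal $C'$-entry of $\tilde{T}_w$ in $[\fC]$ vanishes (to the relevant order in $\varepsilon$) unless $w\in\fC\cap\fC^{-1}$, or an analogous statement about leading matrix coefficients. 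Your proposal acknowledges this obstacle in the last paragraph but does not supply the missing ingredient.

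For part~(b), the argument is circular. There is no well-defined ``two-sided cell attached to $E$'' available at this point (that would already presuppose deep facts of the sort the paper explicitly avoids), and the sentence ``the symmetry of the $W$-graph construction together with the cell containing $w$ identifies both with $\fC$'' asserts exactly the conclusion $w^{-1}\in\fC$ without proof. The anti-involution $\tilde T_w\mapsto\tilde T_{w^{-1}}$ tells you that the right cell of $w$ equals $(\text{left cell of }w^{-1})^{-1}$, but it does not by itself force the left cell of $w^{-1}$ to be $\fC$. In \cite{mylast} this is obtained from the orthogonality in~(a) applied to all left cells together with the identity $c_{w,E}=c_{w^{-1},E}$, not from a direct cell-symmetry argument.
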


In what follows, it will be important to renormalise the 
leading coefficients. In the equal parameter case, this renormalisation
is suggested by the formula in \cite[3.5(b)]{Lu4} (see Remark~\ref{rem44} 
below). In the unequal parameter case, we cannot just take the analogous 
formula; instead, we proceed as follows where we partly rely on a 
conjectural property. Following \cite[\S 1.5]{geja}, we define real 
numbers
\[\breve{n}_w:=\sum_{E \in \Irr(W)} f_E^{-1}\, c_{w,E} \qquad
\mbox{for any $w \in W$}.\]
(Note that, in \cite[\S 1.3]{geja}, we have omitted the factor
$(-1)^{l(w)}$ in the definition of $c_{w,E}$; hence, the numbers 
$\tilde{n}_w$ in \cite[\S 1.5]{geja} will be equal to $(-1)^{l(w)}
\breve{n}_w$.) With this notation, we can now state:

\begin{conj} \label{conj42} Let $\tilde{\cD}:=\{w \in W \mid \breve{n}_w 
\neq 0\}$. Then the following hold.
\begin{itemize}
\item[(a)] Every left cell of $W$ contains a unique element of $\tilde{\cD}$.
\item[(b)] We have $w^2=1$ and $\breve{n}_w=\pm 1$ for every $w \in 
\tilde{\cD}$.
\end{itemize}
\end{conj}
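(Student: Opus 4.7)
The plan is to combine the orthogonality relations in Proposition~\ref{prop41} with a reduction to the equal-parameter case, where the assertion follows from Lusztig \cite{Lu2}. The starting observation is the telescoping identity
\[
\sum_{w\in\fC}\breve{n}_w\,c_{w,E}
\;=\;\sum_{E'\in\Irr(W)}f_{E'}^{-1}\sum_{w\in\fC}c_{w,E'}\,c_{w,E}
\;=\;m(\fC,E),
\]
where the second step uses Proposition~\ref{prop41}(a) and the definition of $\breve{n}_w$. Consequently, if $\breve{n}_w=0$ for every $w\in\fC$, then $m(\fC,E)=0$ for every $E\in\Irr(W)$, contradicting the fact that the left cell module $[\fC]_1$ is nonzero. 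This already gives the easier half of (a): every left cell meets $\tilde{\cD}$.

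For uniqueness in (a) and the assertions in (b), the plan is to first dispose of the equal-parameter case. There one would identify $\tilde{\cD}$ with the set $\cD$ of distinguished involutions of Definition~\ref{disti}, using the formula in \cite[3.5(b)]{Lu4} to relate $\breve{n}_w$ to the integer $n_w$ attached to $P_{1,w}^*$; Lusztig's theorem in \cite{Lu2} then delivers uniqueness within each cell, the involution property $w^2=1$, and $\breve{n}_w=\pm 1$. In the general unequal-parameter setting one would like to complement Proposition~\ref{prop41}(b) (which restricts the support of each function $w\mapsto c_{w,E}$ to cells closed under $w\mapsto w^{-1}$) with a pairing argument on $\sum_{w\in\fC}c_{w,E}\,c_{w^{-1},E'}$, and then argue that a would-be second element of $\tilde{\cD}\cap\fC$ forces a relation among the vectors $(c_{w,E})_E$ incompatible with the linear independence imposed by part (a) of Proposition~\ref{prop41}.

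The main obstacle is exactly this last step in the unequal-parameter case. The standard route in the equal-parameter setting proceeds through Lusztig's $\ba$-function and the associated asymptotic ring $J$, together with positivity properties of the Kazhdan--Lusztig basis (Lusztig's conjectures P1--P15), none of which are known to hold for arbitrary weight functions $L$, and all of which are explicitly avoided in this paper. The honest expectation is therefore that a complete proof of Conjecture~\ref{conj42} in full generality lies beyond what Proposition~\ref{prop41} alone can yield, and will require either genuine new structural input on the leading coefficients $c_{w,E}$ in the multi-parameter setting or, at least for small rank, a case-by-case computational verification using the {\sf PyCox} algorithms developed in Sections~\ref{secwgr}--\ref{secrel}.
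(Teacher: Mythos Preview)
The statement you are trying to prove is labelled a \emph{Conjecture} in the paper, not a theorem; there is no proof in the paper to compare against. Your own final paragraph arrives at exactly the right conclusion: a complete argument in the general unequal-parameter setting is not available with the tools at hand, and this is precisely why the paper states it as a conjecture rather than a result.

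A few specific comments. Your telescoping identity
\[
\sum_{w\in\fC}\breve{n}_w\,c_{w,E}=m(\fC,E)
\]
is correct and gives a clean proof that every left cell meets $\tilde{\cD}$; the paper records this fact just after the conjecture (citing \cite[1.8.5, 2.1.20]{geja}) and derives the same identity in Remark~\ref{rem44a} via the auxiliary algebra $\tilde{J}$, so your route is in fact slightly more direct. Your plan for the equal-parameter case is also accurate: the paper's Remark~\ref{rem43} confirms that the conjecture (and the equality $\tilde{\cD}=\cD$) follows from Lusztig's properties {\bf P1}--{\bf P15}, which are known in that case.

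Where your proposal falls short of a proof is exactly where you say it does: the sketch for uniqueness and for part~(b) in the unequal-parameter case is not an argument but a wish. The suggested ``pairing argument on $\sum_{w\in\fC}c_{w,E}\,c_{w^{-1},E'}$'' combined with ``linear independence imposed by part~(a) of Proposition~\ref{prop41}'' does not lead anywhere concrete: Proposition~\ref{prop41} gives orthogonality of the vectors $(c_{w,E})_{w\in\fC}$ indexed by $E$, not linear independence of the vectors $(c_{w,E})_E$ indexed by $w$, and there is no mechanism here to force $\breve{n}_w\in\{0,\pm1\}$ or $w^2=1$. The paper does not pretend otherwise; it verifies the conjecture computationally in the examples of Section~\ref{secJ} (types $I_2(m)$, $F_4$, small $B_n$) and leaves the general case open.
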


It is known that every left cell contains at least one element of 
$\tilde{\cD}$. (This follows from \cite[1.8.5 and 2.1.20]{geja}.) We 
expect that $\tilde{\cD}$ is precisely the set $\cD$ defined in 
Definition~\ref{disti} and that $\breve{n}_w= n_w$ for all $w \in \cD$. 
The advantage of the definition of $\tilde{\cD}$ is that this set can 
actually be computed in an efficient way; see Algorithm~\ref{algoB} below.

\begin{rem} \label{rem43} Conjecture~\ref{conj42} and the equality
$\tilde{\cD}=\cD$ are known to hold if Lusztig's properties 
{\bf P1}--{\bf P15} in \cite[14.2]{Lusztig03} are satisfied for $W,L$
(see \cite[\S 2.3]{geja} for details). By \cite[\S 16]{Lusztig03} (see 
\cite{Fokko} for $W$ of non-crystallographic type), {\bf P1}--{\bf P15} do 
hold in the equal parameter case where $\Gamma=\Z$ and $L(s)=1$ for all 
$s \in S$. It is also known that then the coefficients of the polynomials 
$P_{y,w}^*$ are non-negative; see \cite{Al}, \cite{Lu1}. Hence, in this 
case, we have 
\[\breve{n}_w=n_w=1 \qquad \mbox{for all $w \in \tilde{\cD}$}.\] 
We shall consider some cases of unequal parameters in the examples
below. 
\end{rem}

\begin{defn} \label{defct} Assume that Conjecture~\ref{conj42} holds for
$W,L$. Let $w \in W$ and $d \in \tilde{\cD}$ be the unique element
such that $w,d$ belong to the same left cell. Then we set 
\[ c_{w,E}^*:=(-1)^{l(w)+l(d)}\breve{n}_d\,c_{w,E} \qquad 
\mbox{for all $E \in \Irr(W)$}.\]
\end{defn}

\begin{rem} \label{rem44} Assume that we are in the equal parameter case
where $\Gamma=\Z$ and $L(s)=1$ for all $s \in S$. Let us check that then 
our renormalisation corresponds to the formula in \cite[3.5(b)]{Lu4}. Thus,
we claim that 
\begin{equation*}
c_{w,E}^*=(-1)^{l(w)+\ba_E}\,c_{w,E} \qquad \mbox{for all $w \in W$
and $E \in \Irr(W)$}.\tag{a}
\end{equation*}
This is seen as follows. Let $w \in W$. By Remark~\ref{rem43}, we have
$\breve{n}_d=1$ where $d \in \tilde{\cD}$ is the unique element such 
that $w,d$ belong to the same left cell. Hence, it will be enough to 
show that
\begin{equation*}
l(d) \equiv \ba_E \bmod 2 \quad \mbox{for all $E \in \Irr(W)$ such that
$c_{w,E} \neq 0$}.\tag{b}
\end{equation*}
Now let $E \in \Irr(W)$ be such that $c_{w,E} \neq 0$. Then, by 
\cite[3.3]{Lu4}, we have $\ba_E=\ba(w)$ where $z\mapsto \ba(z)$ 
($z \in W$) is the function defined by Lusztig \cite{Lu1}. This function 
is constant on the left cells of $W$ and so $\ba_E=\ba(d)$.  Thus, it 
remains to show that $l(d) \equiv \ba(d) \bmod 2$.  But this immediately
follows from \cite[3.2]{Lu1} (see also \cite[Rem.~2.3.5]{geja})
and property {\bf P5} in \cite[14.2]{Lusztig03}.~--~An explanation for
the renormalisation in (a) can be given by using the asymptotic algebra
$J$ introduced by Lusztig \cite{Lu2}. This algebra has a basis $\{t_w 
\mid w \in W\}$ and one can easily check that the map $t_w \mapsto 
(-1)^{l(w)+l(d)}t_w$ (where $d \in \tilde{\cD}$ is such that $w,d$ belong
to the same left cell) defines an algebra automorphism of~$J$. 
\end{rem}

\begin{rem} \label{rem44a} Assume that Conjecture~\ref{conj42} holds
for $W,L$. Let $\fC$ be a left cell and  consider the unique element
$d \in \tilde{\cD} \cap \fC$. Then we have:
\[ c_{d,E}^*=m(\fC,E) \qquad \mbox{for all $E \in \Irr(W)$}.\]
In the framework of Lusztig's theory of the asymptotic algebra, the 
above statement appears in \cite[21.4]{Lusztig03}; see also 
\cite[Chap.~12]{LuBook}. One can give a more elementary argument, as
follows. We consider the algebra $\tilde{J}$ defined in 
\cite[\S 1.5]{geja}. Using $\tilde{J}$, one can define a partition of $W$
into ``left $\tilde{J}$-cells''; see \cite[\S 1.6]{geja}. By 
\cite[Prop.~2.1.20]{geja}, every Kazhdan--Lusztig left cell is a union of 
left $\tilde{J}$-cells. Hence, by \cite[Lemma~3.7]{mylast} and 
\cite[Exp.~1.8.5]{geja}, we have
\[ m(\fC,E)=\sum_{d \in \tilde{\cD} \cap \fC} \breve{n}_d c_{d,E}
\qquad \mbox{for all $E \in \Irr(W)$}.\]
Thus, the claim immediately follows from the assumption that 
Conjecture~\ref{conj42} holds. In particular, we have the following
formula for the decomposition of the left cell module $[\fC]_1$:
\[ [\fC]_1=\sum_{E \in \Irr(W)} m(\fC,E)\,E=\breve{n}_d\sum_{E \in 
\Irr(W)} c_{d,E}\, E\]
(in the appropriate Grothendieck group of representations).
\end{rem}

\begin{defn} \label{rem45} Assume that Conjecture~\ref{conj42} holds
for $W,L$. Let $\fC$ be a left cell of $W$ and denote by $\Irr(W\mid \fC)$
the set of all $E \in \Irr(W)$ such that $E$ is an irreducible constituent 
of $[\fC]_1$. Then we define 
\[ \fX(W\mid \fC):=\bigl(c_{w,E}^*\bigr)_{E \in 
\Irr(W\mid \fC), \, w \in \fC \cap \fC^{-1}}.\]
(Following Lusztig \cite{Lu4}, \cite{Lusztig03}, this table can be 
interpreted as the character table of the subalgebra of the asymptotic 
algebra $J$ which is spanned by $t_w$ for $w \in \fC \cap \fC^{-1}$; the 
unique element $d \in \tilde{\cD} \cap \fC$ corresponds to the identity 
element of this algebra, in accordance with Remark~\ref{rem44a}.)
Note that, by Proposition~\ref{prop41}, we have $E \in \Irr(W\mid 
\fC)$ if and only if $c_{w,E} \neq 0$ for some $w \in \fC$; furthermore, 
$c_{w,E}=0$ unless $w,w^{-1}$ belong to the same left cell. Thus, every
non-zero leading coefficient will appear in one of the tables 
$\fX(W\mid \fC)$ as $\fC$ runs over the left cells of $W$.
\end{defn}

\begin{exmp} \label{ctweyl} Assume that $W$ is a finite Weyl group and that
we are in the equal parameter case where $\Gamma=\Z$ and $L(s)=1$ for all
$s \in S$. Then the tables $\fX(W\mid \fC)$ have been determined explicitly 
by Lusztig \cite[3.14]{Lu4}, based on the results in \cite{LuBook}. In 
particular, it turns out that, if $E\in \Irr(W)$ is ``special'' in the 
sense of Lusztig \cite{Lusztig79b}, then $c_{w,E}^*\geq 0$ for all $w \in W$. 
(Except for some exceptional cases in type $E_7$ and $E_8$, the latter 
statement already appeared in \cite[Prop.~7.1]{LuBook}; one can also check 
this property directly in the exceptional cases by using the methods in the 
proof of \cite[Prop.~7.1]{LuBook}.) Furthermore, still assuming that $E$ is 
special, we actually have $c_{w,E}^*>0$ for all $w \in \fC \cap \fC^{-1}$ 
where $\fC$ is a left cell such that $m(\fC,E)>0$. Thus, for any given 
left cell $\fC$, all the entries in the row of $\fX(W\mid\fC)$ 
corresponding to the unique special representation occurring in $[\fC]_1$ 
are strictly positive. Note that, by Proposition~\ref{prop41}, there
can be at most one row with this property.
\end{exmp}

We shall now be interested in computing the tables $\fX(W\mid \fC)$ 
explictly in the case where $W$ is not of crystallographic type and also 
in some examples involving unequal parameters. 

\begin{algo} \label{algoB} The following procedure verifies if
Conjecture~\ref{conj42} holds for $W,L$ and determines the tables 
$\fX(W\mid \fC)$ for all left cells of $W$.

{\sc Step 1}. Let $\mbox{Cl}(W)$ be the set of conjugacy classes of $W$.
Using the inductive description in \cite[Prop.~8.2.7]{gepf}, we determine 
the ``class polynomials'' $f_{w,C} \in A$ for all $w \in W$ and all 
$C \in \mbox{Cl}(W)$. These polynomials have the following property. 
For $w \in W$, define $T_w:=\varepsilon^{L(w)}\tilde{T}_w$; for any 
$C \in \mbox{Cl}(W)$ let $d_{\text{min}}(C)=\min\{l(w) \mid w 
\in C\}$ and let $w_C \in C$ be a representative such that $l(w_C)=
d_{\text{min}}(C)$. Then we have:
\[ \mbox{trace}(T_w,E_\varepsilon)=\sum_{C\in \text{Cl}(W)} f_{w,C} \, 
\mbox{trace}(T_{w_C},E_\varepsilon)\qquad\mbox{for all $E\in \Irr(W)$}.\]

{\sc Step 2}. By \cite[Chap.~10, 11]{gepf}, the character tables 
\[ X(\cH)=\bigl(\mbox{trace}(T_{w_C}, E_\varepsilon)\bigr)_{E \in \Irr(W),
\, C\in \text{Cl}(W)}\]
are explicitly known. Furthermore, the functions $E \mapsto \ba_E$ and 
$E \mapsto f_E$ are explicitly known; see, for example, the appendix of 
\cite{gepf} (equal parameter case) and the summary in \cite[\S 1.3]{geja} 
for unequal parameters. Thus, in combination with the 
class polynomials in Step~1, we can explicitly compute all the leading 
coefficients $c_{w,E}$ where $w \in W$ and $E \in \Irr(W)$. Consequently, 
we can then also compute the numbers $\breve{n}_w$ for all $w \in W$,
and the set $\tilde{\cD}$.

{\sc Step 3}. By Algorithm~\ref{varalgo}, we can determine the 
partition of $W$ into left cells. (We do not need the additional 
information on the associated $W$-graphs here.) Let $\fC$ be a fixed left 
cell. Using the data in Step 2, we can then explicitly verify if 
Conjecture~\ref{conj42} holds. Using the formula in 
Proposition~\ref{prop41}(a), we can find the multiplicities 
$m(\fC,E)$ for all $E \in \Irr(W)$. Thus, the table $\fX(W\mid \fC)$
is determined.
\end{algo}

In {\sf PyCox}, the function {\tt leftcellleadingcoeffs} implements the 
procedure in Algorithm~\ref{algoB} for a given left cell. This allows the 
explicit determination of all the tables $\fX(W \mid \fC)$ for groups
$W$ of rank up to around $7$ and any weight function $L$. All this even 
works for type $E_7$ where it takes about 3 hours and requires 4GB of 
main memory. With this information, it is then straightforward to verify 
Kottwitz's conjecture for type $E_7$, as mentioned in the introduction. 

Performing only Steps~1 and~2 of Algorithm~\ref{algoB} yields the set 
$\tilde{\cD}$ and all the leading coefficients $c_{w,E}$. This even works 
for type $E_8$ where it takes nearly 18 days and requires about 22GB of main 
memory to compute the $101796$ elements in $\tilde{\cD}$. (As far as I am 
aware, these elements have not been explicitly known before.) All the 
known sets $\tilde{\cD}$ for $W$ of exceptional type are explicitly 
stored in a compact format within {\sf PyCox};  see the function 
{\tt libdistinv}.

The explicit data in the examples below have been computed
with the help of the {\sf PyCox} function {\tt leftcellleadingcoeffs}.

\begin{sidewaystable}[ph!]
\caption{The tables $\fX(W\mid \fC)$ for big left cells in type $H_4$;
$\alpha=(1+\sqrt{5})/2$} 
\label{cell1}
\begin{center} 
{$\arraycolsep 1.3pt 
\begin{array}{c}\begin{array}{ccccccccccccccc} \hline
\multicolumn{15}{l}{\text{Left cells with $326$ elements}} \\\hline
8_r&1&0&{-}1&1&0&0&{-}1&{-}1&0&0&1&{-}1&0&1\\
8_{rr}&1&{-}1&0&1&{-}1&{-}1&0&0&1&1&{-}1&0&1&{-}1\\
18_r&1&1&0&{-}1&{-}1&1&0&0&1&{-}1&{-}1&0&1&1\\
24_t&1&2{-}\alpha&4{-}3\alpha&5{-}3\alpha&2{-}\alpha&2{-}\alpha&0&0&{-}2{+}
\alpha& {-}2{+}\alpha&{-}5{+}3\alpha&{-}4{+}3\alpha&{-}2{+}\alpha&{-}1\\
\overline{24}_t&1&1{+}\alpha&1{+}3\alpha&2{+}3\alpha&1{+}\alpha&1{+}\alpha&0&
0& {-}1{-}\alpha&{-}1{-}\alpha&{-}2{-}3\alpha&{-}1{-}3\alpha&{-}1{-}\alpha
&{-}1\\
24_s&1&2{-}2\alpha&7{-}4\alpha&13{-}8\alpha&6{-}4\alpha&16{-}10\alpha&7{-}4
\alpha& 7{-}4\alpha&16{-}10\alpha&6{-}4\alpha&13{-}8\alpha&7{-}4\alpha&2{-}2
\alpha&1\\
\overline{24}_s&1&2\alpha&3{+}4\alpha&5{+}8\alpha&2{+}4\alpha&6{+}10\alpha&
  3{+}4\alpha&3{+}4\alpha&6{+}10\alpha&2{+}4\alpha&5{+}8\alpha&3{+}4\alpha&
2\alpha&1\\
30_s&1&{-}1{+}\alpha&1{-}\alpha&{-}2{+}\alpha&3{-}\alpha&1{-}\alpha&{-}2{+}
2\alpha&{-}2{+}2\alpha& 1{-}\alpha&3{-}\alpha&{-}2{+}\alpha&1{-}\alpha&{-}
1{+}\alpha&1\\
\overline{30}_s&1&{-}\alpha&\alpha&{-}1{-}\alpha&2{+}\alpha&\alpha&{-}2
\alpha&{-}2\alpha&\alpha&2{+}\alpha&{-}1{-}\alpha&\alpha&{-}\alpha&1\\
40_r&1&2&3&1&0&{-}2&{-}1&{-}1&{-}2&0&1&3&2&1\\
48_{rr}&2&0&2&{-}2&0&0&0&0&0&0&2&{-}2&0&{-}2\\\hline
\end{array} \\\\
\begin{array}{ccccccccccccccccccc}\hline \multicolumn{19}{l}{\text{Left cells
with $392$ elements}} \\ \hline
 10_r&1&0&{-}1&{-}1&1&0&0&0&{-}1&0&1&0&0&1&{-}1&{-}1&0&1\\
 16_t&1&{-}1{+}\alpha&{-}\alpha&1{-}\alpha&1&{-}1&{-}1&\alpha&0&
  {-}\alpha&0&1&1&{-}1&\alpha&{-}1{+}\alpha&1{-}\alpha&{-}1\\
 \overline{16}_t&1&{-}\alpha&{-}1{+}\alpha&\alpha&1&{-}1&{-}1&
  1{-}\alpha&0&{-}1{+}\alpha&0 &1&1&{-}1&1{-}\alpha &{-}\alpha&\alpha&{-}1\\
 18_r&1&0&1&{-}1&{-}1&0&0&0&1&0&1&0&0&{-}1&1&{-}1&0&1\\
 24_t&1&3{-}2\alpha&3{-}2\alpha&1{-}\alpha&5{-}3\alpha&2{-}\alpha&2{-}\alpha&
  1{-}\alpha&0&{-}1{+}\alpha&0&{-}2{+}\alpha&{-}2{+}\alpha&{-}5{+}3\alpha&
  {-}3{+}2\alpha& {-}1{+}\alpha&{-}3{+}2\alpha&{-}1\\
 \overline{24}_t&1&1{+}2\alpha&1{+}2\alpha&\alpha&2{+}3\alpha&1{+}\alpha&
  1{+}\alpha&\alpha&0&{-}\alpha& 0&{-}1{-}\alpha&{-}1{-}\alpha&
  {-}2{-}3\alpha&{-}1{-}2\alpha& {-}\alpha&{-}1{-}2\alpha&{-}1\\
 24_s&1&4{-}3\alpha&5{-}3\alpha&2{-}\alpha&13{-}8\alpha&7{-}4\alpha&
  7{-}4\alpha& 11{-}7\alpha&16{-}10\alpha& 11{-}7\alpha&2{-}2\alpha&
  7{-}4\alpha&7{-}4\alpha& 13{-}8\alpha&5{-}3\alpha&2{-}\alpha&4{-}3\alpha&1\\
 \overline{24}_s&1&1{+}3\alpha&2{+}3\alpha&1{+}\alpha&5{+}8\alpha&3{+}4\alpha&
  3{+}4\alpha& 4{+}7\alpha&6{+}10\alpha&4{+}7\alpha&2\alpha&3{+}4\alpha&
  3{+}4\alpha&5{+}8\alpha& 2{+}3\alpha&1{+}\alpha&1{+}3\alpha&1\\
 30_s&1&1&{-}1&2{-}\alpha&{-}2{+}\alpha&1{-}\alpha&1{-}\alpha&2{-}\alpha&
  {-}2{+}2\alpha& 2{-}\alpha& 2{-}2\alpha&1{-}\alpha&1{-}\alpha&
  {-}2{+}\alpha&{-}1&2{-}\alpha&1&1\\ 
  \overline{30}_s'&1&1&{-}1&1{+}\alpha&{-}1{-}\alpha&\alpha&\alpha&
  1{+}\alpha& {-}2\alpha&1{+}\alpha& 2\alpha&\alpha&\alpha&{-}1{-}\alpha&
  {-}1&1{+}\alpha&1&1\\
 40_r&2&1&3&3&2&{-}2&{-}2&{-}1&{-}2&{-}1&2&{-}2&{-}2&2&3&3&1&2\\
 48_{rr}&2&1&1&1&{-}2&0&0&1&0&{-}1&0&0&0&2&{-}1&{-}1&{-}1&{-}2
\\\hline \end{array} \\\\
\begin{array}{ccccccccccccccccccccccccc} \hline 
\multicolumn{25}{l}{\text{Left cells with $436$ elements}}\\ \hline
 6_s&1&\alpha&{-}1{-}\alpha&{-}\alpha&{-}\alpha&\alpha&1{+}\alpha&{-}1&{-}1&
  {-}\alpha& 1{+}\alpha& {-}\alpha&{-}1{-}\alpha&\alpha&\alpha&1&1{+}\alpha&
  \alpha&{-}1{-}\alpha& {-}\alpha&{-}\alpha&{-}1&1&\alpha\\
 \overline{6}_s&1&1{-}\alpha&{-}2{+}\alpha&{-}1{+}\alpha&{-}1{+}\alpha&
  1{-}\alpha&2{-}\alpha& {-}1&{-}1&{-}1{+}\alpha&2{-}\alpha&{-}1{+}\alpha&
  {-}2{+}\alpha&1{-}\alpha&1{-}\alpha&1&2{-}\alpha& 1{-}\alpha&
 {-}2{+}\alpha& {-}1{+}\alpha&{-}1{+}\alpha&{-}1&1&1{-}\alpha\\
 16_t&1&\alpha&{-}\alpha&{-}1&{-}1&{-}1{+}\alpha&1&1{-}\alpha&0&0&0&0&0&0&0&
  0&{-}1& 1{-}\alpha&\alpha&1&1&{-}1{+}\alpha&{-}1&{-}\alpha\\
 \overline{16}_t&1&1{-}\alpha&{-}1{+}\alpha&{-}1&{-}1&{-}\alpha&1&\alpha&0&
  0&0&0&0&0& 0&0&{-}1&\alpha&1{-}\alpha&1&1&{-}\alpha&{-}1&\alpha{-}1\\
 24_t&1&1{-}\alpha&3{-}2\alpha&2{-}\alpha&2{-}\alpha&3{-}2\alpha&5{-}3\alpha&
  1{-}\alpha&0&0&0&0&0&0&0&0&{-}5{+}3\alpha&{-}3{+}2\alpha&{-}3{+}2\alpha&
  {-}2{+}\alpha& {-}2{+}\alpha& {-}1{+}\alpha&{-}1&\alpha{-}1\\
 \overline{24}_t&1&\alpha&1{+}2\alpha&1{+}\alpha&1{+}\alpha&1{+}2\alpha&
  2{+}3\alpha& \alpha&0&0&0&0&0&0&0&0&{-}2{-}3\alpha&{-}1{-}2\alpha&
 {-}1{-}2\alpha&{-}1{-}\alpha& {-}1{-}\alpha&{-}\alpha&{-}1&{-}\alpha\\
 24_s&1&1{-}\alpha&5{-}3\alpha&3{-}2\alpha&3{-}2\alpha&8{-}5\alpha&
  13{-}8\alpha& 2{-}\alpha&6{-}4\alpha&10{-}6\alpha&6{-}4\alpha&
  10{-}6\alpha&16{-}10\alpha& 4{-}2\alpha&4{-}2\alpha& 2{-}2\alpha&
  13{-}8\alpha&8{-}5\alpha&5{-}3\alpha& 3{-}2\alpha&3{-}2\alpha&2{-}\alpha&
  1&1{-}\alpha\\
 \overline{24}_s&1&\alpha&2{+}3\alpha&1{+}2\alpha&1{+}2\alpha&3{+}5\alpha&
  5{+}8\alpha& 1{+}\alpha&2{+}4\alpha&4{+}6\alpha&2{+}4\alpha&4{+}6\alpha&
  6{+}10\alpha&2{+}2\alpha&2{+}2\alpha& 2\alpha&5{+}8\alpha&3{+}5\alpha&
  2{+}3\alpha&1{+}2\alpha&1{+}2\alpha&1{+}\alpha&1&\alpha\\
 30_s&2&1&1{-}\alpha&0&0&2{-}\alpha&{-}4{+}2\alpha&1{-}\alpha&
  {-}1{+}2\alpha&1{-}\alpha& 4{-}3\alpha&1{-}\alpha&\alpha&1{-}\alpha&
  1{-}\alpha&3{-}2\alpha&{-}4{+}2\alpha& 2{-}\alpha&1{-}\alpha&0& 0&
  1{-}\alpha&2&1\\
 \overline{30}_s&2&1&\alpha&0&0&1{+}\alpha&{-}2{-}2\alpha&\alpha&1{-}2\alpha&
  \alpha&1{+}3\alpha&\alpha&1{-}\alpha&\alpha&\alpha&1{+}2\alpha&
  {-}2{-}2\alpha& 1{+}\alpha&\alpha&0&0&\alpha& 2&1\\
 40_r&2&1&3&0&0&{-}1&2&3&0&{-}2&0&{-}2&{-}2&{-}2&{-}2&2&2&{-}1& 3&0&0&3&2&1\\
 48_{rr}&2&1&1&0&0&1&{-}2&1&0&0&0&0&0&0&0&0&2&{-}1& {-}1&0&0&{-}1&{-}2&{-}1\\
\hline \end{array}\end{array}$}
\end{center}
\end{sidewaystable}

\begin{exmp} \label{gammah34} Let $W$ be of type $H_3$ or $H_4$. 
Let $\fC$ be a left cell of $W$. Using Algorithm~\ref{varalgo}, we obtain
the left cells of $W$; we have 
\[ |\fC|\in \left\{\begin{array}{cl} \{1,5,6,8\} & \qquad \mbox{in type 
$H_3$},\\ \{1,8,18,25,32,36,326,392,436\} & \qquad \mbox{in type $H_4$};
\end{array}\right.\] 
(See also Alvis \cite{Al}.) If $|\fC|$ equals $1$, $5$, 
$25$ or $36$, then $[\fC]_1$ is irreducible and the table
$\fX(W\mid \fC)$ is $(1)$. Now assume that $|\fC|$ equals $6$, $8$, $18$ 
or $32$. Then $[\fC]_1=E_1\oplus E_2$ where $E_1 \not\cong E_2$, 
$\dim E_1=\dim E_2$ and where we choose the notation such that $E_1$ is a 
special representation. Then the table $\fX(W\mid \fC)$ is   
\[  \begin{array}{ccr} E_1 &1 & 1 \\ E_2 & 1 & -1\end{array} \qquad 
\mbox{or}\qquad \begin{array}{ccc} E_1 & 1 &  \alpha \\ E_2 & 1 & 
1-\alpha \end{array},\]
according to whether $f_{E_1}$ equals $2$ or $2+\alpha$, respectively, 
where $\alpha=\frac{1}{2}(1+\sqrt{5})$. Finally, if $|\fC|$ equals $326$, 
$392$ or $436$, then $\fX(W\mid \fC)$ is given by Table~\ref{cell1}. Here, 
we use the notation for $\Irr(W)$ defined in the appendix of \cite{gepf}. 
As in Example~\ref{ctweyl} we note that there is a row in which all entries
are strictly positive, and this row corresponds to the unique special 
representation occurring in $[\fC]_1$ (which is $\overline{24}_s$ in 
Table~\ref{cell1}).
\end{exmp}

\begin{exmp} \label{ctdi} Let $W$ be of type $I_2(m)$ where $m\geq 3$ and 
$S=\{s_1,s_2\}$. Assume that we are in the equal parameter case, where 
$\Gamma=\Z$ and $L(s)=1$ for all $s \in S$. Let $\zeta \in \C$ be a root 
of unity of order $m$, chosen such that $\zeta+\zeta^{-1}=2\cos(2\pi/m)$. 
By \cite[\S 5.4]{gepf}, we have
\[ \Irr(W)=\left\{\begin{array}{cl} \{1_W,\mbox{sgn},\sigma_1,
\sigma_2, \ldots, \sigma_{(m-1)/2}\}& \quad \mbox{if $m$ is odd},\\
\{1_W,\mbox{sgn},\sigma_1,\sigma_2, \ldots, \sigma_{(m-2)/2},
\mbox{sgn}_1,\mbox{sgn}_2\}& \quad \mbox{if $m$ is even},
\end{array}\right.\]
where $1_W$ is the unit and $\mbox{sgn}$ is the sign representation, all
$\sigma_j$ are $2$-dimensional, and $\mbox{sgn}_1,\mbox{sgn}_2$ are two
further $1$-dimensional representations when $m$ is even, in which case we
fix the notation such that $s_1$ acts as $+1$ in $\mbox{sgn}_1$ and as $-1$
in $\mbox{sgn}_2$. The left cells and the corresponding left cell modules 
are given as follows (see, for example, \cite[2.1.8, 2.2.8]{geja}):
\begin{gather*}
\{1_0\},\quad \{1_m\}, \quad \{2_1,1_2,2_3, \ldots,1_{m-1}\}, 
\quad \{1_1,2_2,1_3, \ldots,2_{m-1}\} \quad \mbox{($m$ odd)}\\
\{1_0\},\quad \{1_m\},\quad \{2_1,1_2,2_3,\ldots,
2_{m-1}\}, \quad \{1_1,2_2,1_3, \ldots,1_{m-1}\} \quad \mbox{($m$ even)}.
\end{gather*}
Here, for any $k \geq 0$, we write $1_k=s_1s_2s_1 \cdots $ ($k$ factors) 
and $2_k=s_2s_1s_2 \cdots$ ($k$ factors); note that $1_m=2_m$. We have:
\begin{gather*}
[1_0]_1 = 1_W,\quad [2_1,1_2,2_3,\ldots, 2_{m-1}]_1=(\mbox{sgn}_1) \oplus  
\mbox{(sum of all $\sigma_j$)},\\
[1_1,2_2,1_3, \ldots,1_{m-1}]_1=(\mbox{sgn}_2) \oplus  
\mbox{(sum of all $\sigma_j$)}, \quad [1_m]_1 =\mbox{sgn}.
\end{gather*}
where $\mbox{sgn}_1$ and $\mbox{sgn}_2$ have to be omitted if $m$ is odd.
(Note that \cite[2.2.8]{geja} contains a misprint: the roles of 
$\mbox{sgn}_1$, $\mbox{sgn}_2$ need to be changed there.)
By \cite[Exp.~1.3.7]{geja}, we have $\ba_{1_W}=0$ and 
$\ba_{\text{sgn}}=m$; all the other irreducible representations have 
$\ba$-invariant equal to~$1$. First of all, one easily checks that 
\[ c_{w,1_W}^*=\left\{\begin{array}{cl} 1 & \mbox{ if $w=1$},\\
0 & \mbox{ otherwise},\end{array}\right. \qquad \mbox{and} \qquad
c_{w,\text{sgn}}^*=\left\{\begin{array}{cl} 1 & \quad \mbox{if $w=w_0$},\\
0 & \quad \mbox{otherwise},\end{array}\right.\]
where $w_0\in W$ is the longest element. Next consider $\mbox{sgn}_1$
and $\mbox{sgn}_2$ (in case $m$ is even). Let $w \in W$. For $i=1,2$ we 
denote by $l_i(w)$ the number of occurrences of the generator $s_i$ in a 
reduced expression for $w$. Then 
\[ \mbox{trace}(\tilde{T}_w,\mbox{sgn}_1)=(-1)^{l_2(w)}
\varepsilon^{l_1(w)-l_2(w)}\]
and so
\[ c_{w,\text{sgn}_1}^*=\left\{\begin{array}{cl} -(-1)^{l_2(w)}
& \quad \mbox{if $l_1(w)-l_2(w)=-1$},\\ 0 & \quad \mbox{otherwise}.
\end{array}\right.\]
A similar formula holds for $c_{w,\text{sgn}_2}^*$ where the roles
of $l_1(w)$ and $l_2(w)$ need to be interchanged. Finally, consider 
$\sigma_j$. By \cite[Lemma~8.3.3]{gepf}, we have 
\[ \mbox{trace}(\tilde{T}_{s_i},\sigma_j)=\varepsilon-\varepsilon^{-1} 
\quad \mbox{and} \quad \mbox{trace}(\tilde{T}_{w_k},\sigma_j)=
\zeta^{jk}+\zeta^{-jk}\]
where $w_k=(s_1s_2)^k$ for $0 \leq k \leq m/2$. In particular, we see that 
\[ c_{s_1,\sigma_j}^*=c_{s_2,\sigma_j}^*=1 \quad \mbox{and} \quad 
c_{w_k,\sigma_j}^*=0 \quad \mbox{for all $0\leq k \leq m/2$}.\]
Let $y\in W$ be a conjugate of $s_1$ or $s_2$. Then $l(y)$ is odd and we 
write $l(y)=2k+1$ where $k \geq 0$. Assume that $k \geq 2$ and let 
$i \in \{1,2\}$ be such that $y'=s_iys_i<y$. Then $s_iy$ or $ys_i$ equals 
$w_k$. So we have 
\[\mbox{trace}(\tilde{T}_{y},\sigma_j)=\mbox{trace}(\tilde{T}_{y'},
\sigma_j) +(\varepsilon-\varepsilon^{-1}) \mbox{trace}(\tilde{T}_{w_k},
\sigma_j).\]
Since $\ba_{\sigma_j}=1$, this yields that $c_{y,\sigma_j}=
c_{y',\sigma_j} +(\zeta^{jk}+ \zeta^{-jk})$. Thus, we have 
\[ c_{y,\sigma_j}^*=c_{y,\sigma_j}=1+\sum_{1\leq i \leq k}
\bigl(\zeta^{ji}+ \zeta^{-ji}\bigr).\]
For example, for $m=5$, we obtain for the two left cells with $m-1=4$ 
elements:
\[\fX(W\mid \fC): \quad \begin{array}{ccc} \sigma_1 & 1 & \alpha
\\ \sigma_2 & 1 & 1-\alpha \end{array} \qquad \mbox{where} \qquad 
\alpha=\frac{1}{2}(1+\sqrt{5}).\]
Having computed all the leading coefficients for $W$, we also see that 
\[ \tilde{\cD}=\{1,s_1,s_2,w_0\}.\]
To conclude, let $E\in \Irr(W)$ be special, that is, $E \in \{1_W, 
\mbox{sgn},\sigma_1\}$. By the above computations, we see that 
$c_{w,E}^*\geq 0$ for all $w \in W$; note also that $c_{y,\sigma_1}^*>0$ 
where $l(y)=2k+1$ and $1\leq k \leq m/2-1$. Using this property and
the explicit description of the left cells, we deduce that $c_{w,E}^*>0$ 
for all $w\in \fC \cap \fC^{-1}$ where $\fC$ is a left cell with 
$m(\fC,E)>0$.
\end{exmp}

\begin{conj} \label{cspec} Assume that Conjecture~\ref{conj42} holds for
$W,L$ and define 
\[ \cS_L(W):=\{ E \in \Irr(W) \mid c_{w,E}^* \geq 0 \mbox{ for all 
$w \in W$}\}.\] 
Then, for each left cell $\fC$ of $W$, there is a unique $E \in \cS_L(W)$
such that $m(\fC,E)>0$; furthermore, for this $E$, we have $m(\fC,E)=1$ and 
$c_{w,E}^*>0$ for all $w \in \fC\cap \fC^{-1}$. 
\end{conj}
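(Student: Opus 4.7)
The plan is to organise the verification of Conjecture~\ref{cspec} as a reduction to a finite list of cases that are accessible either analytically or through the {\sf PyCox} machinery of Section~\ref{secJ}. First, I would observe that if $W = W_1 \times W_2$ is a direct product, then $\Irr(W) = \Irr(W_1) \times \Irr(W_2)$, left cells of $W$ are Cartesian products of left cells of the factors, and by \cite[8.2.4]{gepf} the character values on $\cH$ factor accordingly. A short computation shows $c_{(w_1,w_2),E_1 \boxtimes E_2}^* = c_{w_1,E_1}^*\, c_{w_2,E_2}^*$, so the conjecture for $W$ is equivalent to the conjecture for each irreducible factor. Thus we may assume $W$ is irreducible.

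Next, in the equal parameter case (where Conjecture~\ref{conj42} is known by Remark~\ref{rem43}), the existence half of the conjecture is essentially Example~\ref{ctweyl}: if $W$ is crystallographic, Lusztig's explicit determination of $\fX(W\mid \fC)$ provides a special representation $E_0 \in \cS_L(W)$ with $m(\fC,E_0)=1$ and $c_{w,E_0}^* > 0$ on $\fC\cap\fC^{-1}$. For the non-crystallographic types $I_2(m)$, $H_3$, $H_4$, the same claim follows from Example~\ref{ctdi} and Example~\ref{gammah34}. The uniqueness half requires an additional argument: one must show that \emph{no} non-special $E$ has all $c_{w,E}^*\geq 0$. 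Here I would use the orthogonality $\sum_{w\in\fC} c_{w,E}^* c_{w,E'}^* \propto \delta_{E,E'}$ coming from Proposition~\ref{prop41}(a): if two distinct members $E, E'$ of $\cS_L(W)$ both appeared in some $[\fC]_1$, the sum of strictly positive products of non-negative terms could not vanish. This forces at most one member of $\cS_L(W)$ per cell, and combined with the existence shows uniqueness globally (and rules out non-special candidates, since every $E\in\Irr(W)$ appears in some left cell).

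For the unequal parameter case, the plan is to treat the three infinite families $I_2(m)$, $B_n$, $F_4$ separately. For $I_2(m)$ with arbitrary $L$, the computation sketched in Example~\ref{ctdi} adapts verbatim once one tracks the two parameters separately; the set $\cS_L(W)$ can be read off from explicit formulae for $c_{w,\sigma_j}^*$ in terms of sums of roots of unity, and the conjecture follows by direct verification. For type $B_n$ (the asymptotic case being the most delicate), one could try to exploit the combinatorial parameterisation of cells by domino or bi-tableaux to obtain a uniform statement; for the remaining ranks and for $F_4$, the {\sf PyCox} routine {\tt leftcellleadingcoeffs} allows an exhaustive check for each weight function.

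The main obstacle is twofold. First, the hypothesis ``Conjecture~\ref{conj42} holds for $W,L$'' is itself open outside the equal parameter case, so any unequal parameter proof is conditional. Second, the definition of $\cS_L(W)$ is global (positivity across all of $W$), and although Example~\ref{ctweyl} gives a clean picture of special representations in the equal parameter case, no intrinsic characterisation of $\cS_L(W)$ is yet available in the unequal parameter setting. A conceptual proof would probably have to go through Lusztig's asymptotic algebra $J$, interpreting the row of $\fX(W\mid\fC)$ corresponding to $E_0 \in \cS_L(W) \cap \Irr(W\mid\fC)$ as the character of a distinguished positive structure on the subalgebra spanned by $\{t_w : w\in\fC\cap\fC^{-1}\}$ (in the spirit of Remark~\ref{rem45}); producing such a structure in general, without the crystallographic geometric input, is exactly where I would expect the difficulty to concentrate.
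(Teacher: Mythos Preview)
The statement is a conjecture, and the paper does not prove it in general: it verifies the equal parameter case in Remark~\ref{newchar} (reduction to irreducible $W$, then citing Lusztig for Weyl groups and the explicit computations of Examples~\ref{gammah34} and~\ref{ctdi} for $H_3$, $H_4$, $I_2(m)$), treats unequal parameters for $I_2(m)$ and $F_4$ in Examples~\ref{uni2m} and~\ref{unf4}, checks $B_n$ for $n \leq 6$ by machine, and explicitly leaves the general $B_n$ case open. Your proposal tracks this outline closely, and your final paragraph correctly identifies the two places where the argument ceases to be a proof; so what you have written is an accurate summary of the available evidence rather than a proof of the conjecture, which is also all the paper offers.

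Your uniqueness argument via orthogonality is correct and is exactly the observation the paper makes at the end of Example~\ref{ctweyl} (``by Proposition~\ref{prop41}, there can be at most one row with this property''). One sharpening is needed: to turn ``sum of strictly positive products of non-negative terms'' into an actual contradiction you need a single $w \in \fC$ with both $c_{w,E}^*>0$ and $c_{w,E'}^*>0$ simultaneously, and the cleanest source for this is Remark~\ref{rem44a}, which gives $c_{d,E}^* = m(\fC,E) > 0$ and $c_{d,E'}^* = m(\fC,E') > 0$ at the distinguished element $d \in \tilde{\cD} \cap \fC$. With that in hand, your deduction that in the equal parameter case $\cS_L(W)$ coincides with Lusztig's special representations goes through. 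None of this, however, touches general $B_n$ with unequal parameters, and there the paper offers only the expectation (based on \cite[\S 22]{Lusztig03} and \cite[Prop.~3.11]{Lu4}) that multiplicity-freeness of the cell modules should eventually yield the result.
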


\begin{rem} \label{newchar} Let $W$ be a finite Coxeter group and assume 
that we are in the equal parameter case where $\Gamma=\Z$ and $L(s)=1$ for 
all $s\in S$. Then the above conjecture holds where $\cS_L(W)$ consists
precisely of the ``special'' representations as originally defined by
Lusztig \cite{Lusztig79b}.

Indeed, by standard reduction arguments, we can assume that $W$ is 
irreducible. If $W$ is a finite Weyl group, the assertion holds by the 
results of Lusztig \cite{LuBook}, \cite{Lu4}, as already discussed in 
Example~\ref{ctweyl}. If $W$ is of type $I_2(m)$, $H_3$ or $H_4$, then 
the required assertions are verified by inspection using the data in 
Examples~\ref{gammah34} and~\ref{ctdi}.
\end{rem}

\begin{rem} \label{countcell} Assume that Conjectures~\ref{conj42} and 
\ref{cspec} hold for $W,L$. Then we have
\[ \sum_{E \in \cS_L(W)} \dim E\,=\mbox{ number of left cells of $W$ (with
respect to $L$)}.\]
\end{rem}

\begin{proof} We consider the quantity
\[ \nu=\sum_{\fC} \sum_{E \in \cS_L(E)} m(\fC,E)\]
where the first sum runs over all left cells of $W$. Since the direct 
sum of all left cell modules $[\fC]_1$ is isomorphic to the regular 
representation of $W$, we have 
\[ \dim E= \sum_{\fC} m(\fC,E) \qquad \mbox{for every $E \in \Irr(W)$}.\]
This shows that $\nu=\sum_{E \in \cS_L(W)} \dim E$. On the other 
hand, by Conjecture~\ref{cspec}, we have 
\[ 1=\sum_{E\in \cS_L(W)} m(\fC,E) \qquad \mbox{for each left cell $\fC$}.\]
So $\nu$ equals the number of left cells. This yields the desired equality.
\end{proof}

Let us now consider some examples with unequal parameters.

\begin{exmp} \label{uni2m} Let $W$ be of type $I_2(m)$ where $m\geq 3$
is even and $S=\{s_1,s_2\}$. Assume that we have a weight function 
such that $b=L(s_1)>a=L(s_2)>0$. The left cells and the corresponding
left cell modules are given as follows (see, for example, \cite[2.1.8, 
2.2.8]{geja}):
\[ \{1_0\},\;\; \{2_1\},\;\; \{1_{m-1}\},\;\; \{1_m\},\;\;
\{1_1,2_2,1_3,\ldots,2_{m-2}\}, \;\;\{1_2,2_3,1_4,\ldots,2_{m-1}\}.\]
(Notation as in Example~\ref{ctdi}.) We have:
\begin{gather*}
[1_0]_1 = 1_W,\quad
[2_1]_1 =\mbox{sgn}_1,\quad
[1_{m-1}]_1  =\mbox{sgn}_2,\quad [1_m]_1 =\mbox{sgn},\\
[1_1,2_2,1_3,\ldots,2_{m-2}]_1=[1_2,2_3,1_4,\ldots,2_{m-1}]=
\mbox{sum of all $\sigma_j$}.
\end{gather*}
By \cite[Exp.~1.3.7]{geja}, the $\ba$-invariants are given as follows:
\begin{center}
$\ba_{1_W}=0, \quad \ba_{\text{sgn}_1}=a, \quad \ba_{\text{sgn}_2}=
\frac{m}{2}(b-a)+a, \quad \ba_{\text{sgn}}=\frac{m}{2}(a+b), 
\quad \ba_{\sigma_j}=b$.
\end{center}
Arguing as in Example~\ref{ctdi}, we find the following leading coefficients:
\begin{alignat*}{2}
c_{w,1_W}&=\left\{\begin{array}{cl} 1 & \mbox{ if $w=1_0$},\\
0 & \mbox{ otherwise},\end{array}\right. \quad &\mbox{and} \quad
c_{w,\text{sgn}}&=\left\{\begin{array}{cl} 1 & \mbox{ if $w=1_m$},\\
0 & \mbox{ otherwise}.\end{array}\right.\\
c_{w,\text{sgn}_1}&=\left\{\begin{array}{cl} 1 & \mbox{ if $w=2_1$},\\ 
0 & \mbox{ otherwise},\end{array}\right.\quad &\mbox{and}\quad
c_{w,\text{sgn}_2}&=\left\{\begin{array}{cl} -(-1)^{m/2} & 
\mbox{ if $w=1_{m-1}$},\\ 0 & \mbox{ otherwise}.\end{array}\right.
\end{alignat*}
For $\sigma_j$, we now obtain $c_{1_1,\sigma_j}=1$, $c_{2_1,\sigma_j}
=0$ and also $c_{w_k,\sigma_j}=0$ where $w_k=(s_1s_2)^k$ for $0 
\leq k \leq m/2$. Next, assume that $k \geq 3$ is odd; then we find the
recursions 
\[c_{1_k,\sigma_j} = c_{2_{k-2},\sigma_j}+(\zeta^{jk}+\zeta^{-jk})
\qquad \mbox{and}\qquad c_{2_k,\sigma_j}=c_{1_{k-2},\sigma_j}.\]
Finally, the numbers $\breve{n}_w$ have been determined in \cite[1.7.4]{geja}:
\[ \breve{n}_w=\left\{\begin{array}{cl} 1 & \quad \mbox{for $w \in 
\{1_0,1_1,2_1,2_3,1_m\}$},\\
-(-1)^{m/2} & \quad \mbox{for $w=1_{m-1}$},\\
0 & \quad \mbox{otherwise}.\end{array}\right.\]
This allows us, first of all, to verify that Conjecture~\ref{conj42} holds
where
\[ \tilde{\cD}=\{1_0,1_1,2_1,2_3,1_{m-1},1_m\}.\] 
Continuing as in Example~\ref{ctdi}, we conclude that 
Conjecture~\ref{cspec} also holds where 
\[ \cS_L(W)=\{1_W, \mbox{sgn}_1, \mbox{sgn}_2, \mbox{sgn}, \sigma_1\}.\]
For example, for $m=8$ and $b=2$, $a=1$, we obtain for the two left cells 
with $m-2=6$ elements:
\[\fX(W\mid \fC): \quad \begin{array}{cccr} \sigma_1 & 1 & \sqrt{2} & 1 
\\ \sigma_2 & 1 & 0 & -1 \\ \sigma_3 & 1 & -\sqrt{2} & 1\end{array}.\]
\end{exmp}

\begin{exmp} \label{unf4} Let $W$ be of type $F_4$, with generators
and diagram as in Table~\ref{Mcoxgraphs}. Then a weight function $L$ 
is specified by two elements $a,b \in \Gamma_{\geq 0}$ where $a=L(s_0)=
L(s_1)$ and $b=L(s_2)=L(s_3)$. Let us assume that $a>0$ and $b>0$. 
(By the discussion in \cite[\S 2.4]{geja}, the case where $L(s)=0$ for 
some $s \in S$ can always be reduced to the case where all weights are 
strictly positive, possibly by passing to a proper reflection subgroup 
of $W$.) By the symmetry of the diagram, we can also assume that $a\leq b$.
Then, by the results in \cite[\S 4]{mykl}, there are essentially only 
four cases to consider:
\[ a=b, \qquad b=2a, \qquad 2a>b>a, \qquad b>2a.\]
The equal parameter case is already settled by Lusztig \cite{Lusztig79b}.
In the remaining cases it turns out that, for every left cell $\fC$, the 
representation $[\fC]_1$ is multiplicity-free with at most $3$ irreducible 
constituents. Using Algorithm~\ref{algoB} we have checked that
Conjecture~\ref{cspec} holds where the sets $\cS_L(W)$ are given as follows:
\[\begin{array}{l@{\hspace{1mm}}ll}
a=b &:& 1_1, 1_4, 9_1, 9_4, 12, 4_2, 4_5, 8_1, 8_2, 8_3, 8_4; \\
b=2a &:& 1_1, 1_3, 1_4, 2_2, 2_3, 2_4, 4_1, 9_1, 9_2, 9_3, 12, 4_2, 4_3, 
4_4, 4_5, 8_1, 8_2, 8_4; \\
b \not\in\{a,2a\}&:& 1_1, 1_2, 1_3, 1_4, 2_1, 2_2, 2_3, 2_4, 4_1, 9_1,9_2, 
9_3, 9_4, 12, 4_2, 4_3, 4_4, 4_5, 8_1, 8_2, 8_3, 8_4.
\end{array}\]
In all cases where $\fC$ has two irreducible components, the table
$\fX(W \mid \fC)$ is given by:
\[  \begin{array}{ccr} E_1 &1 & 1 \\ E_2 & 1 & -1\end{array} \qquad 
\mbox{where $E_1\in \cS_L(W)$}.\]
We give one particular example where $\breve{n}_d=-1$, for the case
$a=1$, $b=2$: There is a left cell $\fC$ such that $\fC\cap \fC^{-1}=\{d,w\}$
where 
\[ d=s_1s_0s_2s_1 s_0s_2s_1s_2 \quad \mbox{and}\quad w=s_1s_2
s_1s_0s_2s_1s_2s_3s_2s_1s_0 s_2s_1s_2;\]
note that both $l(d)$ and $l(w)$ are even. We have $d \in \tilde{\cD}$, 
$\breve{n}_d=-1$ and 
\[  \fX(W\mid \fC): \quad \begin{array}{ccr} E_1  &1 & 1 \\ E_2 & 1 & -1
\end{array} \qquad \mbox{where} \qquad \begin{array}{l} E_1=4_1\in 
\cS_L(W), \\E_2=16.\end{array}\]
In all cases where $\fC$ has three irreducible components, the table
$\fX(W \mid \fC)$ is given by:
\[  \begin{array}{crrr} E_1 &1 & 2 & 1 \\ E_2 & 1 & -1 & 1 \\
E_3 & 1 & 0 & -1 \end{array} \qquad \mbox{where} \qquad
\begin{array}{l} E_1=12\in \cS_L(W),\\ E_2 \in \{6_1,6_2\},\\  
E_3=16. \end{array}\]
We note the following special behaviour in the case where $b=2a$. By 
\cite[\S 4]{mykl}, there are three left cells $\fC_1,\fC_2,\fC_3$ such 
that
\[ [\fC_1]_1=1_3\oplus 8_3, \qquad [\fC_2]_1=2_1\oplus 9_1, \qquad
[\fC_3]_1=9_1\oplus 8_3.\]
The corresponding representations in $\cS_L(W)$ are $1_3$, $9_1$, $9_1$,
respectively. Since we have $\mbox{Hom}_W([\fC_1]_1,[\fC_3]_1)\neq 0$ and
$\mbox{Hom}_W([\fC_2]_1,[\fC_3]_1)\neq 0$, the three left cells are 
contained in the same two-sided cell. Thus, there are two representations
in $\cS_L(W)$ belonging to this two-sided cell. (This is not an isolated
event: there are many examples in type $B_n$ with unequal parameters as 
well.)~--~This phenomenon can not happen in the equal parameter case 
where every two-sided cell contains a unique special representation (see 
Lusztig \cite[Chap.~5]{LuBook}). 
\end{exmp}

The above examples show that Conjecture~\ref{cspec} holds for $W$ of 
type $I_2(m)$, $F_4$ and any weight function $L$. Thus, the case that 
remains to be dealt with is type $B_n$ with unequal parameters. I have
checked that Conjecture~\ref{cspec} holds for type $B_n$ where $n\in
\{2,3,4,5,6\}$ and any weight function. In general, by the results in 
\cite[\S 22]{Lusztig03}, it is expected that all left cell modules
$[\fC]_1$ in type $B_n$ are multiplicity-free; hence, one may hope that
the tables $\fX(W\mid \fC)$ might be determined as in 
\cite[Prop.~3.11]{Lu4}. If this were true, then Conjecture~\ref{cspec} 
would follow in this case as well.


\affiliationone{M. Geck\\ Institute of Mathematics\\University of 
Aberdeen\\Aberdeen AB24 3UE, UK \email{m.geck@abdn.ac.uk}\\
\url{http://www.abdn.ac.uk/~mth190}}
\end{document}